\documentclass[12pt]{amsart}

\usepackage{amsmath,amssymb,amsbsy,amsfonts,amsthm,latexsym,
                        amsopn,amstext,amsxtra,euscript,amscd,mathrsfs,color,bm,cite}
                       
\usepackage{float} 
\usepackage[english]{babel}
\usepackage{mathtools}
\usepackage{todonotes}
\usepackage{url}
\usepackage[colorlinks,linkcolor=blue,anchorcolor=blue,citecolor=blue,backref=page]{hyperref}
\usepackage{enumitem}

\usepackage[norefs,nocites]{refcheck}

\begin{document}

\newtheorem{theorem}{Theorem}
\newtheorem{lemma}[theorem]{Lemma}
\newtheorem{claim}[theorem]{Claim}
\newtheorem{cor}[theorem]{Corollary}
\newtheorem{prop}[theorem]{Proposition}
\newtheorem{definition}{Definition}
\newtheorem{question}[theorem]{Question}
\newtheorem{conj}[theorem]{Conjecture}
\newcommand{\hh}{{{\mathrm h}}}

\theoremstyle{remark}
\newtheorem{rem}{Remark}

\numberwithin{equation}{section}
\numberwithin{theorem}{section}
\numberwithin{table}{section}

\numberwithin{figure}{section}

\def\sssum{\mathop{\sum\!\sum\!\sum}}
\def\ssum{\mathop{\sum\ldots \sum}}
\def\iint{\mathop{\int\ldots \int}}

%\def\squareforqed{\hbox{\rlap{$\sqcap$}$\sqcup$}}
%\def\qed{\ifmmode\squareforqed\else{\unskip\nobreak\hfil
%\penalty50\hskip1emrll\nobreak\hfil\squareforqed
%\parfillskip=0pt\finalhyphendemerits=0\endgraf}\fi}%%

%  use the AMS-Euler Fraktur fonts
%%%%%%%%%%%%%%%%%%%%%%%%%%%%%%%%%%
\newfont{\teneufm}{eufm10}
\newfont{\seveneufm}{eufm7}
\newfont{\fiveeufm}{eufm5}
%%%%%%%%%%%%%%%%%%%%%%%%%%%%%%%%%
%
%  allow automatic size selection in math mode
%
%%%%%%%%%%%%%%%%%%%%%%%%%%%%%%%%%
\newfam\eufmfam
     \textfont\eufmfam=\teneufm
\scriptfont\eufmfam=\seveneufm
     \scriptscriptfont\eufmfam=\fiveeufm
%%%%%%%%%%%%%%%%%%%%%%%%%%%%%%%%%
%
%  \frak works on a single symbol at a time...
%
\def\frak#1{{\fam\eufmfam\relax#1}}

\newcommand{\bflambda}{{\boldsymbol{\lambda}}}
\newcommand{\bfmu}{{\boldsymbol{\mu}}}
\newcommand{\bfxi}{{\boldsymbol{\xi}}}
\newcommand{\bfrho}{{\boldsymbol{\rho}}}

\def\fA{{\mathfrak A}}
\def\fB{{\mathfrak B}}
\def\fC{{\mathfrak C}}
\def\fI{{\mathfrak I}}
\def\fJ{{\mathfrak J}}
\def\fK{{\mathfrak K}}
\def\fM{{\mathfrak M}}
\def\fS{{\mathfrak S}}
\def\fW{{\mathfrak W}}
\def\fU{{\mathfrak U}}

\def \balpha{\bm{\alpha}}
\def \bbeta{\bm{\beta}}
\def \bgamma{\bm{\gamma}}
\def \blambda{\bm{\lambda}}
\def \bchi{\bm{\chi}}
\def \bphi{\bm{\varphi}}
\def \bpsi{\bm{\psi}}
\def \bomega{\bm{\omega}}
\def \btheta{\bm{\vartheta}}

\def \bzeta{\bm{\zeta}}
\def \bxi{\bm{\xi}}

\newcommand{\vect}[1]{\mathbf{#1}}

%\def\squareforqed{\hbox{\rlap{$\sqcap$}$\sqcup$}}
%\def\qed{\ifmmode\squareforqed\else{\unskip\nobreak\hfil
%\penalty50\hskip1emrll\nobreak\hfil\squareforqed
%\parfillskip=0pt\finalhyphendemerits=0\endgraf}\fi}

%%%%%%%%%%%%%%%%%%%%%%%%%
% Alphabet calligraphie %
%%%%%%%%%%%%%%%%%%%%%%%%%
\def\cA{{\mathscr A}}
\def\cB{{\mathscr B}}
\def\cC{{\mathscr C}}
\def\cD{{\mathscr D}}
\def\cE{{\mathscr E}}
\def\cF{{\mathscr F}}
\def\cG{{\mathscr G}}
\def\cH{{\mathscr H}}
\def\cI{{\mathscr I}}
\def\cJ{{\mathscr J}}
\def\cK{{\mathscr K}}
\def\cL{{\mathscr L}}
\def\cM{{\mathscr M}}
\def\cN{{\mathscr N}}
\def\cO{{\mathscr O}}
\def\cP{{\mathscr P}}
\def\cQ{{\mathscr Q}}
\def\cR{{\mathscr R}}
\def\cS{{\mathscr S}}
\def\cT{{\mathscr T}}
\def\cU{{\mathscr U}}
\def\cV{{\mathscr V}}
\def\cW{{\mathscr W}}
\def\cX{{\mathscr X}}
\def\cY{{\mathscr Y}}
\def\cZ{{\mathscr Z}}
\newcommand{\rmod}[1]{\: \mbox{mod} \: #1}

\def\cg{{\mathcal g}}

\def\vr{\mathbf r}

\def\e{{\mathbf{\,e}}}
\def\ep{{\mathbf{\,e}}_q}
\def\eq{{\mathbf{\,e}}_q}

\def\Tr{{\mathrm{Tr}}}
\def\Nm{{\mathrm{Nm}}}

 \def\SS{{\mathbf{S}}}

\def\lcm{{\mathrm{lcm}}}

\def\fl#1{\left\lfloor#1\right\rfloor}
\def\rf#1{\left\lceil#1\right\rceil}

\def\mand{\qquad \mbox{and} \qquad}

\newcommand{\commB}[2][]{\todo[#1,color=blue!60]{B: #2}}
\newcommand{\commI}[2][]{\todo[#1,color=green!60]{I: #2}}
\newcommand{\commII}[2][]{\todo[#1,color=red!60]{I: #2}}
\newcommand{\commJ}[2][]{\todo[#1,color=red!60]{Jr: #2}}

%%%%%%%%%%%%%%%%%%%%%%%%%%%%%%%%%%%%%%%%%%%%%%%%%%%%%%%%
%%%%%%%%%%%%%%%%%%%%%%%%%%%%%%%%%%%%%%%%%%%%%%%%%%%%%%%%
%%%%%%%%%%%%%%%%%%%%%%%%%%%%%%%%%%%%%%%%%%%%%%%%%%%%%%%%
%%%%%%%%%%%%%%%%%%%%%%%%%%%%%%%%%%%%%%%%%%%%%%%%%%%%%%%%

%%%%%%%  END OF STANDARD STUFF %%%%%%%%%

%%%%%%%%%%%%%%%%%%%%%%%%%%%%%%%%%%%%%%%%%%%%%%%%%%%%%%%%
%%%%%%%%%%%%%%%%%%%%%%%%%%%%%%%%%%%%%%%%%%%%%%%%%%%%%%%%
%%%%%%%%%%%%%%%%%%%%%%%%%%%%%%%%%%%%%%%%%%%%%%%%%%%%%%%%
%%%%%%%%%%%%%%%%%%%%%%%%%%%%%%%%%%%%%%%%%%%%%%%%%%%%%%%
%%%%%%%%%%%
%%% Spell

\hyphenation{re-pub-lished}

\mathsurround=1pt

\def\bfdefault{b}

\def \F{{\mathbb F}}
\def \K{{\mathbb K}}
\def \Z{{\mathbb Z}}
\def \Q{{\mathbb Q}}
\def \R{{\mathbb R}}
\def \C{{\mathbb C}}
\def \N{{\mathbb N}}
\def\Fp{\F_p}
\def \fp{\Fp^*}

\def\Kmnp{\cK_p(m,n)}
\def\Kmnq{\cK_q(m,n)}
\def\KKap{\cH_p(a)}
\def\KKaq{\cH_q(a)}
\def\KKmnp{\cH_p(m,n)}
\def\KKmnq{\cH_q(m,n)}

\def\Kl{{\mathsf K}}

\def\Klmnp{\cK_p(\ell, m,n)}
\def\Klmnq{\cK_q(\ell, m,n)}

\def \SALMNq {\cS_q(\balpha;\cL,\cI,\cJ)}
\def \SALMNp {\cS_p(\balpha;\cL,\cI,\cJ)}

\def \SACXMQX {\fS(\balpha,\bzeta, \bxi; M,Q,X)}

\def \balpha{\bm{\alpha}}
\def \bbeta{\bm{\beta}}
\def \bgamma{\bm{\gamma}}
\def \blambda{\bm{\lambda}}
\def \bchi{\bm{\chi}}
\def \bphi{\bm{\varphi}}
\def \bpsi{\bm{\psi}}

\def\SIJq{S_q(\balpha; \cI,\cJ)}
\def\SIJp{S_p(\balpha; \cI,\cJ)}

\def\UMNp{U_p(\balpha, \bbeta; M,N)}
\def\VMNp{V_p(\balpha, \bbeta; M,N)}
\def\LG#1{\(\frac{#1}{q}\)}

\def\SMJq{S_q(\balpha; \cM,\cJ)}
\def\SMJp{S_p(\balpha; \cM,\cJ)}

\def\WIKq{W_{a,q}(\bgamma; \cI,K)}
\def\WIKp{W_{a,p}(\bgamma; \cI,K)}
\def\WMKq{W_{a,q}(\bgamma; \cM,K)}
\def\WMKp{W_{a,p}(\bgamma; \cM,K)}

\def\WaIKq{W_{a,q}(\balpha, \bgamma; \cI;K)}
\def\WaIKp{W_{a,p}(\balpha, \bgamma; \cI,K)}
\def\WaMKq{W_{a,q}(\balpha, \bgamma; \cM;K)}
\def\WaMKp{W_{a,p}(\balpha, \bgamma; \cM;K)}
\def\WaMKKq{\widetilde{W}_{a,q}(\balpha, \bgamma; \cM,\cK)}
\def\WaMKKp{\widetilde{W}_{a,p}(\balpha, \bgamma; \cM,\cK)}

\def\WaIKKq{\widetilde{W}_{a,q}(\balpha, \bgamma; \cI,\cK)}
\def\WaIKKp{\widetilde{W}_{a,p}(\balpha, \bgamma; \cI,\cK)}

\def\WAMNQ{W_q(\balpha, \bgamma; M,N,Q)}

\def\RIJp{\cR_p(\cI,\cJ)}
\def\RIJq{\cR_q(\cI,\cJ)}

\def\TWXJp{\cT_p(\bomega;\cX,\cJ)}
\def\TWXJq{\cT_q(\bomega;\cX,\cJ)}
\def\TWpXJp{\cT_p(\bomega_p;\cX,\cJ)}
\def\TWqXJq{\cT_q(\bomega_q;\cX,\cJ)}
\def\TWJq{\cT_q(\bomega;\cJ)}
\def\TWqJq{\cT_q(\bomega_q;\cJ)}

\newcommand{\supp}{\operatorname{supp}}

  \def \kbar{k^{-1} }
 \def \xbar{x^{-1} }
  \def \ybar{y^{-1} }

\title[Squarefree polynomials]{Squarefree values of random polynomials}
\author{Bryce Kerr}
\address{UNSW Canberra School of Science, Australia}
\email{bryce.kerr@unsw.edu.au}
\author{Watcharakiete Wongcharoenbhorn}
\address{UNSW Canberra School of Science, Australia}
\email{w.wongcharoenbhorn@unsw.edu.au}

\begin{abstract}
In this paper we consider the error term in the expected count of squarefree values of a polynomial when evaluated at integers \(n\le N\). We prove a power saving on average over polynomials of fixed degree and height at most \(H\). Our results improve previous estimates of Browning and Shparlinski in that the required range of \(H\) is independent of the degree, but depends on \(N\)
\end{abstract}
\maketitle
\tableofcontents

\section{Introduction}
For $f\in\Z[X]$, let
\[
 S_f(N)=\#\{1\le n\le N: f(n)\text{ is squarefree}\}.
\]
Here a nonzero integer is squarefree if its absolute value is squarefree.
Erd\H{o}s conjectured that an irredicuble polynomial $f$
with no fixed prime-square divisor takes infinitely many squarefree
values~\cite{Erdos}.  A folklore strengthening of this conjecture is
that the squarefree values have a positive density. To state this
precisely, define
\[
 \rho_f(q)=\#\{a\in\Z/q\Z:f(a)\equiv0\pmod q\},
 \qquad
 c_f=\prod_p\left(1-\frac{\rho_f(p^2)}{p^2}\right).
\]
The conjecture then predicts that for every irreducible
polynomial $f$
\begin{equation}
\label{eq:conjecture}
 S_f(N)=c_fN+o(N).
\end{equation}
The condition $\rho_f(p^2)<p^2$ for every prime $p$ is equivalent to
the absence of a fixed prime-square divisor, and it implies that
$c_f>0$. Thus~\eqref{eq:conjecture} can be considered a strengthening of Erd\H{o}s' conjecture.

Estermann~\cite{Estermann} established~\eqref{eq:conjecture} for
quadratic polynomials. For cubics, Erd\H{o}s~\cite{Erdos} proved
infinitude and Hooley~\cite{Hooley} obtained~\eqref{eq:conjecture}.
Granville~\cite{Granville} proved~\eqref{eq:conjecture} for all irreducible polynomials assuming the
$abc$-conjecture. The general unconditional problem remains open. A recent OpenAI preprint
claims a proof of~\eqref{eq:conjecture} for irreducible quartics
\cite{OpenAI}.

We also mention work of Helfgott~\cite{Helfgott} who proved the corresponding asymptotic for squarefree values of cubic
polynomials when evaluated at primes. See also Reuss~\cite{Reuss} for subsequent refinements.

The problem becomes easier if we allow an averaging over the coefficients of $f$. Filaseta~\cite{Filaseta}
proved that almost all polynomials attain a squarefree value.
Quantitative estimates for the error in~\eqref{eq:conjecture} were obtained by
Shparlinski~\cite{Shp} and Browning and Shparlinski~\cite{BrShp}.
To state these results, put
\begin{align}
\label{eq:Fk}
 \cF_k(H)=\left\{\sum_{j=0}^k a_jX^j:
 a_j\in\Z,\ |a_j|\le H,\ \gcd(a_0,\ldots,a_k)=1\right\}.
\end{align}
Browning and Shparlinski~\cite[Theorem~1.1]{BrShp} proved that
\begin{equation}
\label{eq:bs}
 \frac{1}{\#\cF_k(H)}\sum_{f\in\cF_k(H)}|S_f(N)-c_fN|
 \ll N^{1-\delta}
\end{equation}
for some $\delta>0$, provided that $k\ge4$ and
\begin{equation}
\label{eq:bs-conds}
 N^{k-3+\varepsilon}\le H\le N^A.
\end{equation}
Jelinek~\cite{Jelinek} obtained further almost all asymptotics when averaged over subsets of~\eqref{eq:Fk}
with all $a_j$'s fixed except two.

Our main result improves the range of $H$ in
\eqref{eq:bs-conds} by a bound independent of $k$.
\begin{theorem}
\label{thm:main}
Let $k\ge4$ and $A\ge1$ be fixed, let $\varepsilon\in(0,1/2)$, and put
$\varpi=(3-\sqrt5)/2$. If
\begin{align}
\label{eq:H11}
 N^{\varpi+\varepsilon}\le H\le N^A,
\end{align}
then
\begin{equation}
\label{eq:mainthm}
 \frac{1}{\#\cF_k(H)}
 \sum_{f\in\cF_k(H)}|S_f(N)-c_fN|
 \ll_{k,\varepsilon,A} N^{1-\delta},
\end{equation}
where $\delta>0$ depends only on $\varepsilon$.
\end{theorem}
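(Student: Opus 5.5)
We follow the standard strategy: write $\mu^2(m)=\sum_{d^2\mid m}\mu(d)$, so that
\[
S_f(N)=\sum_{d}\mu(d)\,\#\{n\le N: d^2\mid f(n)\},
\]
and split the $d$-range at parameters $1\le \xi\le \eta$. For $d\le \xi$ we use the exact count
\[
\#\{n\le N:d^2\mid f(n)\}=N\rho_f(d^2)/d^2+O(\rho_f(d^2)).
\]
Since $\rho_f(d^2)\ll_k d^{\varepsilon}$ for $f$ primitive with nonzero discriminant (discriminant-zero $f$ form a negligible proportion), summing over $d\le\xi$ gives $c_fN+O(N\xi^{-1+\varepsilon}+\xi^{1+\varepsilon})$. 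Here we also use the tail $\sum_{d>\xi}\rho_f(d^2)/d^2\ll \xi^{-1+\varepsilon}$, which holds on average over $f$ even where the pointwise bound fails. So it remains to bound, on average over $f\in\cF_k(H)$, the contribution
\[
E_f=\#\{(n,d): n\le N,\ d>\xi,\ d^2\mid f(n)\}.
\]
Note that $|f(n)|\ll HN^k$, so $d\ll (HN^k)^{1/2}$.

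For the medium range $\xi<d\le\eta$, we average over $f$ first. For fixed $n$ and $d$, the condition $d^2\mid f(n)$ is a single linear congruence in $(a_0,\dots,a_k)$; it fixes $a_0$ modulo $d^2$ given the others. Hence the number of $f$ satisfying it is $\ll \#\cF_k(H)\,(d^{-2}+H^{-1})$. Summing over $n\le N$ and $\xi<d\le\eta$ gives an average of
\[
\ll N\xi^{-1}+N\eta/H .
\]
This is $\ll N^{1-\delta}$ provided $\eta\le HN^{-\delta}$.

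The large range $d>\eta\approx H N^{-\delta}$ is the main obstacle. In Browning--Shparlinski this range forces $H\ge N^{k-3}$; to obtain a range of $H$ independent of $k$ we must count more cleverly. Write $f(n)=d^2 m$ with $|m|\ll HN^k/d^2$. The idea is to exploit the freedom in $f$. Fix $a_1,\dots,a_k$ and let $g(n)=f(n)-a_0$. Then $d^2 m-a_0=g(n)$, which yields a count of lattice points
\[
\#\{(n,d,m,a_0): |a_0|\le H,\ d^2m = g(n)+a_0\}.
\]
For each fixed $n$, the value $g(n)+a_0$ ranges over an interval of length $2H$ around $g(n)$, and we count integers in that interval of the form $d^2m$ with $d>\eta$. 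Counting pairs $(d,m)$ with $d^2m$ in a short interval $[X,X+2H]$, where $X\asymp HN^k$, can be done in two ways:
\begin{enumerate}
\item[(i)] fix $m$: then $d$ lies in an interval of length $\ll H/\sqrt{mX}+1$;
\item[(ii)] fix $d$: then $m$ lies in an interval of length $\ll H/d^2+1$.
\end{enumerate}
This alone gives only trivial bounds. The gain must come from averaging over the second free coefficient, say $a_1$ (or $a_k$), which shifts $X$ by multiples of $n$. We therefore count solutions of $d^2m\equiv g_0(n)\pmod n$-type conditions jointly in $(a_0,a_1)$: the pair $(a_0+a_1n)$ covers an interval of length $\asymp HN$ with multiplicity, so for each $n$ we need to count $d^2m$ in an interval of length $HN$ near $HN^k$, weighted by representations.

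We then balance the contributions from small $m$ and large $m$. Small $m\le M$: for fixed $m$ the number of $d$ is $\ll HN/\sqrt{mHN^k}+1$. Large $m$, i.e.\ $d\le\sqrt{HN^k/M}$: we use (ii). We optimise using also the medium range bound with threshold $\eta\approx H$. The resulting constraint is of the form $H^2\ge N^{1-\dots}H^{-1}\cdots$. We expect this optimisation to produce the quadratic $\varpi^2-3\varpi+1=0$, i.e.\ $\varpi=(3-\sqrt5)/2$, yielding the condition $H\ge N^{\varpi+\varepsilon}$. The step I expect to be hardest is making this large-$d$ count uniform over $n$ and the other coefficients. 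If it fails, I would try instead an approximate-determinant or geometry-of-numbers count of $(d,m)$ in the two-dimensional family, using the fact that $\rho_f(d^2)$ is small on average.

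Finally, we choose $\xi=N^{\delta}$ and $\delta$ small in terms of $\varepsilon$, and combine the three ranges to obtain~\eqref{eq:mainthm}.
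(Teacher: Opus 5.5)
\textbf{There is a genuine gap.} Nothing in your proposal controls the large square divisors $HN^{-\delta}<d\ll (HN^k)^{1/2}$. That window is where all the difficulty, and the exponent $\varpi$, lives.

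Your medium-range bound uses only the freedom in $a_0$. The $+1$ lost when counting $a_0\in[-H,H]$ in a fixed class modulo $d^2$ sums to $ND_1/H$ over $d\le D_1$, so it stops you at $D_1\approx HN^{-\delta}$. Beyond that point you only record an expectation, and the sketched count cannot deliver it:
\begin{enumerate}
\item[(a)] Both (i) and (ii) lose a $+1$ for every value of the variable being fixed. Whatever split you choose, the bound per choice of $(n,a_2,\dots,a_k)$ is therefore at least about $\min\{S,(HN^k)^{1/3}\}$, where $S\asymp HN^k/D^2$ bounds the cofactor $s=|f(n)|/d^2$ (your $m$).
\item[(b)] Multiplying by the $NH^{k-1}$ such choices and dividing by $H^{k+1}$, you would need $\min\{S,(HN^k)^{1/3}\}\le H^2N^{-\delta}$.
\item[(c)] At the lower end of your large range this forces roughly $H\ge N^{k/5}$. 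That condition depends on $k$, so no quadratic in $h$ can come out of this.
\end{enumerate}
The regime $H>N^{2/3}$ is also not addressed; once $2H\ge n$, the map $(a_0,a_1)\mapsto a_0+a_1n$ is no longer injective.

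A smaller point: $\rho_f(d^2)\ll d^{\varepsilon}$ is false pointwise for primitive $f$ of nonzero discriminant. For example, $\rho_f(p^2)=p$ for $f=X^k-p^2c$. The paper simply imports Shparlinski's averaged estimate for $d\le N^{1-\eta}$, where $\eta=2\varepsilon/7$ (not your $\eta$).

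\textbf{What the paper does instead.} Write $H=N^h$. The paper closes the window with two ideas absent from your plan.

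First, the medium range is pushed far beyond $d\approx H$, up to $E=H^{k+1}N^{-(k+2)\eta}$ when $H\le N^{2/3}$. It does this by counting the whole lattice $\cL(n,d^2)\cap[-H,H]^{k+1}$ rather than the $a_0$-fibres. The rescaling inequality $\#(\cL\cap\cB)\ll\operatorname{vol}(\cB)/\det\cL+t^{-k}\#(\cL\cap t\cB)$ with $t=N^{-\eta}/4$ reduces matters to vectors in $[-tH,tH]^{k+1}$; the saving of one power of $t$ over $t^{-(k+1)}$ is what matters. Each such vector gives a polynomial $P$ with $P(n)\ne0$, because $n>2tH$. The divisor bound then yields $S(D,M)\ll M/D+MD/(t^kH^{k+1})+tMN^{o(1)}$, valid for all $H\le N$.

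Second, for $d>E$ and $H\le N^{2/3}$, write $|f(n)|=sd^2$ with $s\sim U=N^\theta$, and fix $n$, $s$ and $a_{m+1},\dots,a_k$. The number of free low-order coefficients grows with $\theta$: $m=\lfloor k/2+\theta/2-h-\eta(k/2+1)\rfloor$, chosen so that $\sqrt{|f(n)|/s}$ lies in an interval of length $\ll N^{-\eta}$. This pins down $d$, and since $2H<n$ it also pins down $(a_0,\dots,a_m)$. So the $+1$ is paid only once per tuple $(n,s,a_{m+1},\dots,a_k)$. The result is $R(E)\ll N^{1+o(1)}H^{-k/2+h+\eta(k/2+1)}S^{1-h/2}$ with $S=B^2/E^2$. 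Inserting $E$ gives the exponent $1+k(1-3h+h^2)-h+\tfrac32h^2+O(k\eta)$, and that is where $1-3\varpi+\varpi^2=0$ comes from.

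For $H>N^{2/3}$ the paper instead takes $E=H^{1/2}N^{(k-1)/2+\eta}$ and uses the Browning--Shparlinski determinant-method bound above $E$. For $H>N$ it treats the medium range via the dual lattice: there the congruences lift to $w_{j+1}=nw_j$, which forces $|w_0|\ll D^2/(HM^k)$.
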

\begin{rem}
In Theorem~\ref{thm:main}, one may take $\delta=\varepsilon/7$.
\end{rem}
For $H$ satisfying~\eqref{eq:H11}, all but a proportion $O(N^{-\delta/2})$ of the polynomials
in $\cF_k(H)$ satisfy $S_f(N)=c_fN+O(N^{1-\delta/2})$.

Sofos has studied the second moment of the error in~\eqref{eq:conjecture}. From~\cite[Theorem~1.1]{Sofos} it follows that for every fixed $\varepsilon>0$ and $H\ge N^{k+3+\varepsilon}$, we have
\[
 (2H)^{-k-1}
 \sum_{\substack{f=\sum_{j=0}^k a_jX^j\\
                  |a_j|\le H,\ a_k\ne0}}
 |S_f(N)-c_fN|^2
 =\gamma_0N^{1/2}+O(N^{1/2-\delta})
\]
for a constant $\gamma_0>0$ and some $\delta>0$ depending on $\varepsilon$.

More recently, Simhi~\cite{Simhi} evaluated higher-order moments of the normalized error terms in the range $N=H^{o(1)}$ and deduced that they are distributed according to a Gaussian mixture that is not itself Gaussian.
\newline

{\bf Acknowledgement:} The authors thank ChatGPT for editing and proofreading.

\section{Notation}
As usual, $\mu$ denotes the M\"{o}bius function, so that $\mu^2$ is the indicator function of squarefree numbers. Define
\[
 \cF_k^*(H)=\{f\in\cF_k(H):\deg f=k,\ f\text{ is irreducible over }\Z\}.
\]

For $q\in\N$ and real numbers $0\le M<N$, write

\begin{equation}
\label{eq:rho_f-definition}
\begin{aligned}
 \rho_f(q,N)&=\#\{n\in\N:n\le N,\ f(n)\equiv0\pmod q\},\\
 \rho_f(q,M,N)&=\#\{n\in\N:M<n\le N,\ f(n)\equiv0\pmod q\}.
\end{aligned}
\end{equation}
 Throughout, $k\ge4$ and
$A\ge1$ are fixed. Implied constants may depend on these parameters
and on fixed positive parameters such as $\varepsilon$ and $\eta$.
All occurrences of $o(1)$ are uniform in the stated ranges as
$N\to\infty$, with $H\le N^A$. 
\section{Proof overview}
Following Browning and Shparlinski~\cite{BrShp}, M\"obius inversion and equidistribution of integers in residue classes extract the main term of
$c_fN$ in~\eqref{eq:conjecture}. The resulting error is controlled by the following averaged lattice point count
\begin{equation}
\label{eq:SDdef}
 S(M,D)=\frac{1}{H^{k+1}}
 \sum_{\substack{d\sim D\\n\sim M}}
 \#\bigl(\cL(n,d^2)\cap[-H,H]^{k+1}\bigr),
\end{equation}
where
\begin{align}
\label{eq:LDB}
 \cL(n,d^2)=
 \{(h_0,\ldots,h_k)\in\Z^{k+1}:
 h_0+h_1n+\cdots+h_kn^k\equiv0\pmod{d^2}\},
\end{align}
with $M,D$ satisfying
\begin{align*}
N^{1-\eta}\le M\le N, \quad N\le D \le (HN^{k})^{1/2}.
\end{align*}
For the above parameter ranges, we need to show
$$S(M,D)\ll N^{1-\delta}.$$
Our proof is split into cases depending on the size of $H$ relative to $N$ and the size of $D$ relative to both $H$ and $N$.

Consider first $H>N$. For large $D$, we appeal to a result of Browning and Shparlinski based on the determinant method; see Lemma~\ref{lem:det-met} below. For medium-sized $D$, we use transference theorems to pass to the dual lattice; see Lemma~\ref{lem:lat-cont-large-H} below. This reduces to counting solutions to the following system of congruences
\begin{align}
\label{eq:hgsdflksa}
h_0n^j\equiv h_j\pmod{d^2}, \quad 1\le j \le k
\end{align}
with variables 
\begin{align}
\label{eq:sdgkdfglkdjgdslkjgsdflkgsdkljgh}
|h_j|\le \frac{D^2}{H}, \quad n\sim N, \quad d\sim D.
\end{align}
Rearranging variables in~\eqref{eq:hgsdflksa} (and using that $N<H$) allows us to lift the congruences to equations over $\Z$. In particular, the successive coordinates of $(h_0,h_1,\dots,h_k)$ satisfy the following linear recurrence:
\[
 h_{j+1}=nh_j\quad 0\le j<k.
\]
Thus $|h_k|=n^k|h_0|$, and the upper bounds on $h_k$ and $n$ in~\eqref{eq:sdgkdfglkdjgdslkjgsdflkgsdkljgh} leave only a few possible
values of $h_0$, since 
$$h_0\ll \frac{h_k}{n^{k}}\ll \frac{D^2}{HN^{k}}.$$
Fixing $h_0,n,d$ then leaves $O(1)$ solutions for the remaining variables $h_1,\dots,h_k$.

Consider next $H\le N$. For large $D$, we exploit the averaging over coefficients of $f$ to reduce the problem to counting squares in short intervals; see Lemma~\ref{lem:small-coef-control}. This improves on Browning and Shparlinski's use of the determinant method. In particular, writing 
\begin{align}
f(n)=sd^2
\end{align}
with 
$$f(n)=a_0+a_1n+\dots+a_kn^{k}$$
and fixing $n,s$ and the leading coefficients of $f$, corresponding to $a_k,a_{k-1},\dots,a_m$, 
each solution in the remaining variables $a_0,a_1,\dots,a_{m-1}$ corresponds to a square in a short interval determined by $s,n,a_k,\dots,a_m$. These squares can be counted by elementary methods.

Medium-sized $D$ are handled in Lemma~\ref{lem:lat-cont-small-H}, and our main input is a lattice rescaling inequality
\begin{align}
\label{eq:lslslslslslslsl}
 \#(\cL\cap\cB)\ll
 \frac{\operatorname{vol}(\cB)}{\det\cL}
 +t^{-k}\#(\cL\cap t\cB)
 \qquad(0<t\le1).
\end{align}
Applying~\eqref{eq:lslslslslslslsl} in~\eqref{eq:SDdef} and then interchanging the sums reduces the problem to counting divisors of $f(n)$, which we handle with standard estimates. The saving of one power of $t$ over the trivial bound $t^{-(k+1)}$ in~\eqref{eq:lslslslslslslsl} is essential.
This replaces the use of Davenport's lemma in the
related argument of Browning and Shparlinski~\cite[Lemma~3.2]{BrShp} and avoids the loss associated with directly estimating the successive minima of our lattices in~\eqref{eq:SDdef}.

\section{Preliminary manipulations -- small square divisors}
\label{sec:prelim}
We first use the standard reduction via M\"obius inversion. Put
\[
 B=\bigl((k+1)HN^k\bigr)^{1/2}.
\]
\begin{lemma}
\label{lem:prelim}
Let $0<\eta<1/7$ and $N^\eta\le H\le N^A$. Then
\begin{align}
\label{eq:first-reduc}
 &\frac{1}{\#\cF_k(H)}
 \sum_{f\in\cF_k(H)}|S_f(N)-c_fN|
 \ll N^{1-\eta+o(1)}\nonumber\\
 &\quad+\frac{1}{\#\cF_k^*(H)}
 \sum_{f\in\cF_k^*(H)}
 \sum_{N^{1-\eta}<d\le B}
 \mu^2(d)\rho_f(d^2,N^{1-\eta},N).
\end{align}
\end{lemma}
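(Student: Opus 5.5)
We follow the standard Möbius reduction of Browning and Shparlinski. The first step discards the polynomials in $\cF_k(H)\setminus\cF_k^*(H)$, i.e. the reducible ones and those with $a_k=0$. Those with $a_k=0$ number $O(H^k)$. Reducible polynomials of height at most $H$ number $O(H^{k}\log H)$ by standard counts (e.g.\ Kuba, or the elementary bound via Mahler measure). Since $\#\cF_k(H)\gg H^{k+1}$ and trivially $|S_f(N)-c_fN|\le 2N$, these contribute $O(NH^{-1+o(1)})\ll N^{1-\eta+o(1)}$ because $H\ge N^\eta$. The same comparison $\#\cF_k^*(H)=\#\cF_k(H)(1+O(H^{-1+o(1)}))$ lets us normalise by $\#\cF_k^*(H)$ at the end. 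We also discard $n\le N^{1-\eta}$, which costs $O(N^{1-\eta})$ in $S_f(N)$. Here $c_f$ is unaffected.

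For $f\in\cF_k^*(H)$ and $N^{1-\eta}<n\le N$, $f(n)\neq 0$ and $|f(n)|\le (k+1)HN^k=B^2$, so every $d$ with $d^2\mid f(n)$ satisfies $d\le B$. Write
\[
\mu^2(f(n))=\sum_{d^2\mid f(n)}\mu(d),
\]
so that
\[
\#\{N^{1-\eta}<n\le N: f(n)\ \text{squarefree}\}=\sum_{d\le B}\mu(d)\rho_f(d^2,N^{1-\eta},N).
\]
Split at $d=z:=N^{1-\eta}$. For $d\le z$ we use $\rho_f(d^2,N^{1-\eta},N)=\rho_f(d^2)(N-N^{1-\eta})/d^2+O(\rho_f(d^2))$. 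The main terms sum to $c_fN$ up to the tail $N\sum_{d>z}\mu^2(d)\rho_f(d^2)/d^2$ and an $O(N^{1-\eta})$ term. The large $d$ are bounded trivially by the $\mu^2$-weighted sum in the statement. This produces exactly the second term of~\eqref{eq:first-reduc}.

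It remains to bound two error terms on average over $f$: the remainder $E_1(f)=\sum_{d\le z}\mu^2(d)\rho_f(d^2)$ and the tail $E_2(f)=N\sum_{d>z}\mu^2(d)\rho_f(d^2)d^{-2}$. For squarefree $d$, multiplicativity and Hensel-type bounds give $\rho_f(p^2)\le k$ when $p\nmid \mathrm{disc}(f)$. For $p\mid\mathrm{disc}(f)$ one only has $\rho_f(p^2)\le kp$ or so, and this is the main obstacle: these primes must be handled on average. I would average over $f$ directly instead. For fixed squarefree $d$,
\[
\frac{1}{\#\cF_k(H)}\sum_f\rho_f(d^2)=\sum_{a\bmod d^2}\Pr\bigl(f(a)\equiv0 \bmod d^2\bigr).
\]
This is $\ll d^{o(1)}$ when $d^2\le H$, since the coefficient $a_0$ alone equidistributes. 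For $d^2>H$ the probability is $\ll 1/H$ per residue, giving $\ll d^2/H$. The gcd condition only costs a constant factor. Hence the average of $E_1$ is
\[
\ll z^{1+o(1)}+H^{-1}z^3.
\]
That second term is too big, so for $d^2>H$ I would instead bound $\rho_f(d^2)\le \tau(d)^{O(1)}\prod_{p\mid d,\,p\mid\mathrm{disc} f}p$, and use that the average over $f$ of $\gcd(d,\mathrm{disc} f)$ is $d^{o(1)}$. The latter holds because $\mathrm{disc}$ is a nonzero polynomial in the coefficients and vanishes mod $p$ with probability $O(1/p)$ when $p\le H$, with a trivial treatment for $p>H$ by lowering $z$ if necessary. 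This gives an average of $E_1$ of size $z^{1+o(1)}=N^{1-\eta+o(1)}$. The same averaged bound, $\mathbb{E}\rho_f(d^2)\ll d^{o(1)}$, summed against $N/d^2$ for $d>z$ gives an average of $E_2$ of size $N z^{-1+o(1)}=N^{\eta+o(1)}$. This is acceptable since $\eta<1/7<1/2$. The condition $\eta<1/7$ is presumably what makes all error exponents fall below $1-\eta$.
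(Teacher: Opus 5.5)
Your skeleton matches the paper's. You discard $\cF_k(H)\setminus\cF_k^*(H)$ via Kuba, expand $\mu^2$ by M\"obius, split at $z=N^{1-\eta}$, and reduce to the remainder $E_1(f)$ and the tail $E_2(f)$. Removing $n\le N^{1-\eta}$ trivially at the start is a slightly cleaner substitute for the paper's divisor-bound step.

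\textbf{The gap is in your treatment of $E_1$ and $E_2$.} The claim $\mathbb{E}_f\gcd(d,\mathrm{disc} f)\ll d^{o(1)}$ needs $\Pr(e\mid\mathrm{disc} f)\ll C^{\omega(e)}/e$ for every $e\mid d$. Equidistribution of the coefficients in $[-H,H]^{k+1}$ gives this only when $e$ itself is $\lesssim H$, not merely when each prime factor of $e$ is at most $H$. But $d$ runs up to $N^{1-\eta}$ in $E_1$ and to infinity in $E_2$, while $H$ may be as small as $N^{\eta}$. So typical $d$, and many of their divisors, are far larger than $H$. Even for a single prime $p>H$ your method gives only $\Pr(p\mid\mathrm{disc} f)\ll 1/H$, i.e.\ $\mathbb{E}_f\gcd(p,\mathrm{disc} f)\ll 1+p/H$, which is useless.

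``Lowering $z$'' is not available either. $z=N^{1-\eta}$ is pinned by the range $d>N^{1-\eta}$ in the second term of the lemma, and any $d\in(z',N^{1-\eta}]$ you create must be bounded by exactly the same kind of sum. So neither $\mathbb{E}_fE_1\ll N^{1-\eta+o(1)}$ nor $\mathbb{E}_fE_2\ll N^{\eta+o(1)}$ is actually established.

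\textbf{The missing observation} is that the discriminant primes are harmless for each individual $f\in\cF_k^*(H)$, so no averaging is needed here. Sum over $d$ with $f$ fixed, instead of averaging over $f$ with $d$ fixed.
\begin{itemize}
\item Since $f$ is primitive, $\rho_f(p^2)\le kp$ for all $p$, and $\rho_f(p^2)\le k$ if $p\nmid\mathrm{disc}(f)$.
\item Since $f$ is irreducible of degree $k$, $\mathrm{disc}(f)$ is a nonzero integer with $|\mathrm{disc}(f)|\ll H^{2k-2}$.
\item For squarefree $d=ed'$ with $e=\gcd(d,\mathrm{disc} f)$, this gives $\rho_f(d^2)\le k^{\omega(d)}e$.
\end{itemize}
Hence
\[
E_1(f)\le\sum_{e\mid\mathrm{disc}(f)}k^{\omega(e)}e\sum_{d'\le z/e}k^{\omega(d')}
\ll z(\log z)^{k-1}\sum_{e\mid\mathrm{disc}(f)}k^{\omega(e)}\ll zN^{o(1)},
\]
and in the same way $E_2(f)\ll Nz^{-1}N^{o(1)}=N^{\eta+o(1)}$. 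Both hold uniformly in $f$, using $H\le N^A$ and the divisor bound.

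This is exactly the individual estimate $M_f(N)=c_fN+O((N^{1-\eta}+N^{\eta})H^{o(1)})$ that the paper quotes from Shparlinski's Equation (4.8). With it in place of your averaging step, the rest of your argument goes through.
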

\begin{proof}
There are $O_k(H^k)$ polynomials of degree less than $k$.
By Kuba~\cite[Theorem~1]{Kuba}, there are also $O_k(H^k)$
reducible primitive polynomials of degree $k$. From
$$\#\cF_k(H)\asymp H^{k+1}$$ it follows
\begin{equation}
\label{eq:reduc-reducible-polyn}
 \frac{1}{\#\cF_k(H)}
 \sum_{f\in\cF_k(H)\setminus\cF_k^*(H)}
 |S_f(N)-c_fN|\ll \frac NH
\end{equation}
and we have used the fact that $0\le S_f(N)\le N$ and $c_f\ll 1$.

For $f\in\cF_k^*(H)$ and $1\le n\le N$, we have
$0<|f(n)|\le B^2$. Hence
\begin{equation}
\label{eq:Sfmobius}
 S_f(N)=\sum_{1\le n\le N}\sum_{d^2\mid f(n)}\mu(d)
       =\sum_{d\le B}\mu(d)\rho_f(d^2,N).
\end{equation}
Writing
\[
 M_f(N)=\sum_{d\le N^{1-\eta}}\mu(d)\rho_f(d^2,N),
\]
we obtain
\[
 S_f(N)=M_f(N)+
 O\left(\sum_{N^{1-\eta}<d\le B}\mu^2(d)\rho_f(d^2,N)\right).
\]

By~\cite[Equation~(4.8)]{Shp}, as recorded in~\cite[Equation~(4.1)]{BrShp},
\[
 M_f(N)=c_fN+
 O\bigl((N^{1-\eta}+N^\eta)H^{o(1)}\bigr)
       =c_fN+O(N^{1-\eta+o(1)}).
\]
Finally, $f(n)\ne0$ and the divisor bound give, uniformly for
$f\in\cF_k^*(H)$,
\begin{equation}
\label{eq:FFA!!!}
 \sum_{n\le N^{1-\eta}}\sum_{d^2\mid f(n)}1
 \ll N^{1-\eta+o(1)}.
\end{equation}
Thus 
$$\rho_f(d^2,N)=\rho_f(d^2,N^{1-\eta},N)+O(N^{1-\eta+o(1)})$$
which after combining with~\eqref{eq:reduc-reducible-polyn},~\eqref{eq:Sfmobius} then summing over $f\in \cF_k(H)$ proves the lemma.
\end{proof}

\section{Squares in short intervals -- large square divisors}
\label{subsec:disp-large-square-div}
For $0<\eta<1/7$, define
\begin{equation}
\label{eq:REdef}
 R(E)=\frac{1}{\#\cF_k^*(H)}
 \sum_{f\in\cF_k^*(H)}
 \sum_{E<d\le B}\mu^2(d)\rho_f(d^2,N^{1-\eta},N).
\end{equation}
An empty sum is understood to be zero. With $N,H$ as in
Theorem~\ref{thm:main}, we take $\eta=2\varepsilon/7$ and
\begin{equation}
\label{eq:Edef}
 E=
 \begin{cases}
 H^{k+1}N^{-(k+2)\eta},& H\le N^{2/3},\\
 H^{1/2}N^{(k-1)/2+\eta},& H>N^{2/3}.
 \end{cases}
\end{equation}
Both choices satisfy $E\ge N^{1-\eta}$ for sufficiently large $N$.

\begin{prop}
\label{prop:large-squares}
Assume the hypotheses of Theorem~\ref{thm:main}, put
$\eta=2\varepsilon/7$, and choose $E$ as in~\eqref{eq:Edef}. Then
\begin{align}
\label{eq:second-reduc}
 &\frac{1}{\#\cF_k(H)}
 \sum_{f\in\cF_k(H)}|S_f(N)-c_fN|
 \ll N^{1-\eta+o(1)}\nonumber\\
 &\quad+\frac{1}{\#\cF_k^*(H)}
 \sum_{f\in\cF_k^*(H)}
 \sum_{N^{1-\eta}<d\le\min(E,B)}
 \mu^2(d)\rho_f(d^2,N^{1-\eta},N).
\end{align}
\end{prop}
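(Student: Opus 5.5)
By Lemma~\ref{lem:prelim} (applicable because $N^{\eta}\le N^{\varpi+\varepsilon}\le H\le N^A$), it suffices to show that the tail $R(E)$ in~\eqref{eq:REdef} satisfies $R(E)\ll N^{1-\eta+o(1)}$. The bound is trivial if $E\ge B$, so assume $E<B$.

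\emph{Reformulation.} Since $\#\cF_k^*(H)\asymp H^{k+1}$, I would bound $H^{k+1}R(E)$ by the number of quadruples $(f,n,d,s)$ with
\[
 f\in\cF_k^*(H),\quad N^{1-\eta}<n\le N,\quad E<d\le B,\quad f(n)=sd^2 .
\]
The bound $|f(n)|\le B^2$ forces $0<|s|\le B^2/d^2< B^2/E^2$. Following the overview, I would then fix $n$, $s$ and the leading coefficients $a_k,\dots,a_m$ for a parameter $1\le m\le k$ to be chosen. Put $T=a_mn^m+\dots+a_kn^k$ and $P=a_0+a_1n+\dots+a_{m-1}n^{m-1}$. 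The equation becomes $sd^2=T+P$, where $P$ ranges over an interval of length $\ll HN^{m-1}$. Hence $d^2$ is confined to an interval of length $\ll HN^{m-1}/|s|$ around $T/s$. Because $d>E$, the number of admissible $d$ is
\[
 \ll 1+\frac{HN^{m-1}}{|s|E}.
\]

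\emph{Counting the lower coefficients.} For each such $d$, the value $P=sd^2-T$ is fixed, and I need to count the tuples $(a_0,\dots,a_{m-1})$ with $|a_j|\le H$ representing it. Fix $a_1,\dots,a_{m-1}$ first, so that $a_0$ is determined. If $H<n/2$, the base-$n$ expansion shows the representation is unique, giving $O(1)$ tuples. For $H\ge n$, the number of representations is $\ll (H/n)^{m-1}+1$, obtained by solving successively modulo $n$. Multiplying out and summing over $|s|\le B^2/E^2$ and $n\le N$, the total is
\[
 \ll N H^{k-m+1}\Bigl(\frac{B^2}{E^2}+\frac{HN^{m-1}\log N}{E}\Bigr)\Bigl(1+\frac{H^{m-1}}{N^{(1-\eta)(m-1)}}\Bigr).
\]
Dividing by $H^{k+1}$, the task is to choose $m$ so that this is $\ll N^{1-\eta+o(1)}$ for the given $E$. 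An alternative organisation is to sum over $d>E$ first, using $|s|\le B^2/d^2$.

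\emph{The two ranges of $H$.}
\begin{itemize}
\item For $H\le N^{2/3}$ (and more generally $H<N^{1-\eta}/2$), uniqueness of the base-$n$ representation lets me fix essentially nothing. Then $f$ is determined by $(n,s,d)$, and the count is
\[
 \ll N\sum_{d>E}\frac{B^2}{d^2}\ll \frac{NHN^k}{E}.
\]
After normalisation by $H^{k+1}$, this is acceptable precisely when $E$ is large in terms of $H$ and $N$. I would then check that the stated choice $E=H^{k+1}N^{-(k+2)\eta}$ is compatible, possibly by interpolating with the previous display for an intermediate $m$.
\item For $H>N^{2/3}$, I would take $m$ small, say $m=1$ or $m=2$. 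With $E=H^{1/2}N^{(k-1)/2+\eta}$, the term $B^2/(HE^2)\asymp N^{k}/E^2$ becomes $N^{1-2\eta}/H$. Combined with the factor $N$ from the $n$-sum, and with $HN^{m-1}/E$, this should give $N^{1-\eta}$ once the exponents are balanced.
\end{itemize}

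\emph{Main obstacle.} I expect the bookkeeping in the case $H\le N^{2/3}$ to be the hardest step. There the stated $E$ is a power of $H$ rather than of $N$, so the argument must carefully interpolate, through the choice of $m$, between the uniqueness of base-$n$ digits and the count of squares in intervals of length $HN^{m-1}/|s|$. The aim is to avoid losing the factor $B^2/E^2$ coming from the $s$-sum. If a single $m$ does not suffice, I would split the $d$-range dyadically, $d\sim D$, and choose $m=m(D)$ separately on each piece.
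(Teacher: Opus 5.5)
Your counting inequality is correct, but the proposal stops at the step that carries the content of the proposition: showing that, for the specific $E$ in~\eqref{eq:Edef}, some choice of $m$ makes the bound $\ll N^{1-\eta+o(1)}$. The concrete choices you do name fail on part of the range.

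\begin{itemize}
\item For $H>N^{2/3}$ with $m=1$, your own computation leaves $N\cdot H^{-1}B^2/E^2\asymp N^{2-2\eta}/H$. This exceeds $N^{1-\eta}$ throughout $N^{2/3}<H<N^{1-\eta}$, so ``balancing exponents'' cannot rescue it.
\item For $H=N^h\le N^{2/3}$, fixing nothing gives $N^{k+1}/(H^kE)=N^{k+1+(k+2)\eta}H^{-2k-1}$. This is admissible only when $h\ge (k+(k+3)\eta)/(2k+1)>4/9$, so it misses $h$ near $\varpi$.
\end{itemize}
Nothing in the proposal explains why the threshold is $\varpi=(3-\sqrt5)/2$, which shows the decisive computation is missing.

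The gap can be closed inside your framework. For $H>N^{2/3}$ take $m=2$. Since $n>N^{1-\eta}$, your display divided by $H^{k+1}$ becomes
\[
\ll\Bigl(\frac{N^{2-2\eta}}{H^{2}}+\frac{N^{2}\log N}{HE}\Bigr)\Bigl(1+\frac{H}{N^{1-\eta}}\Bigr),
\]
which is $\ll N^{1-\eta}$ for every $H\ge N^{2/3}$. For $H=N^h\le N^{2/3}$, the two terms $N^{k+1}H^{1-m}E^{-2}$ and $N^mH^{1-m}E^{-1}$ are at most $N^{1-\eta}$ exactly when
\[
\frac{k-(2k+1)h+(2k+5)\eta}{h}\le m\le\frac{1+kh-(k+3)\eta}{1-h}.
\]
The gap between these bounds equals
\[
\frac{k(3h-1-h^2)+h(2-h)-\eta\bigl((k+3)h+(2k+5)(1-h)\bigr)}{h(1-h)}\ge\frac{2-h}{1-h}\ge 2.
\]
This holds because $1-3h+h^2\le-\frac53\varepsilon=-\frac{35}{6}\eta$ on $[\varpi+\varepsilon,2/3]$ and $k\ge4$; this is where $\varpi$ enters. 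The lower bound is also $<k$ and the upper bound $>1$. So a single $m\in[1,k]$, independent of $s$, works, and no dyadic splitting in $d$ is needed.

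Completed this way, your route differs from the paper's in two respects. For $H>N^{2/3}$, the paper applies the Browning--Shparlinski determinant-method bound (Lemma~\ref{lem:det-met}) to each individual $f$. Your averaged count is elementary and self-contained, and gives an even smaller bound in that range. For $H\le N^{2/3}$, the paper chooses $m$ separately on each dyadic block $s\sim U$, so that, using $d\asymp(n^k/s)^{1/2}$, at most one $d$ survives. You instead use the cruder spacing $d>E$ and tolerate several $d$, at the cost of a factor $\log N$.
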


The following is based on a reduction to counting squares in short intervals and will be used for large $D$ and small $H$.
\begin{lemma}[Large $D$ and small $H$]
\label{lem:small-coef-control}
Let $0<\eta<1/7$, $H=N^h$ with $\eta\le h\le2/3$, and
$N^{1-\eta}\le E<B$. Put $S=B^2/E^2$. Then
\begin{equation}
\label{eq:err-frm-large-d-small-H}
 R(E)\ll
 N^{1+o(1)}
 H^{-k/2+h+\eta(k/2+1)}
 S^{1-h/2}.
\end{equation}
\end{lemma}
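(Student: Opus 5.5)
The plan is to bound $R(E)$ by counting triples $(f,n,d)$ directly, using the averaging over the coefficients of $f$ rather than any structure of the lattices $\cL(n,d^2)$. Since $\#\cF_k^*(H)\asymp H^{k+1}$, it suffices to show that the number of triples with $f=\sum_j a_jX^j$, $|a_j|\le H$, $N^{1-\eta}<n\le N$, $E<d\le B$ and $d^2\mid f(n)$ is at most $H^{k+1}$ times the right side of~\eqref{eq:err-frm-large-d-small-H}.

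\textbf{Step 1: parametrise by the cofactor.} Since $|f(n)|\le B^2$, any such triple gives $f(n)=sd^2$ with $s\ne0$ and
\[
 |s|\le B^2/E^2=S .
\]
I drop the conditions $\mu^2(d)=1$ and irreducibility, keeping only $f(n)\ne0$. I then count quadruples $(f,n,s,d)$ instead.

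\textbf{Step 2: fix the top coefficients and reduce to squares in an interval.} Choose an integer $1\le m\le k$, to be optimised later. Fix $n$, $s$, and the coefficients $a_k,\dots,a_m$; this costs $N\cdot S\cdot H^{k-m+1}$ choices. Put $c=a_mn^m+\dots+a_kn^k$. The remaining part $a_0+\dots+a_{m-1}n^{m-1}$ lies in an interval of length $\ll Hn^{m-1}$. Hence $d^2$ is confined to an interval
\[
 \cI=\cI(s,n,a_k,\dots,a_m)
\]
of length $\ll HN^{m-1}/|s|$ and lying above $E^2$. The elementary estimate is that an interval of length $Y$ contained in $[X,\infty)$ holds $\ll 1+Y/X^{1/2}$ squares. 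So the number of admissible $d$ is
\[
 \ll 1+\frac{HN^{m-1}}{|s|E}.
\]

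\textbf{Step 3: finish the count.} For each such $d$, the variables $a_1,\dots,a_{m-1}$ are free, giving $H^{m-1}$ choices, and $a_0$ is then determined. Multiplying out and summing over $s$ gives
\[
 \ll N H^{k}\Bigl(S+\frac{HN^{m-1}\log N}{E}\Bigr).
\]
Alternatively, I will sum over $d$ first, with $s$ determined up to $1+HN^{m-1}/d^2$ choices. This gives a second bound with $\min(S,\,B)$ in place of $S$.

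\textbf{Step 4: interpolate in $m$ and $h$, and substitute.} Neither bound is uniformly good, so I optimise the choice of $m$ against $h$, interpolating between the ranges for $m$ (and hence between powers of $H=N^h$). I then substitute $B^2=(k+1)HN^k$ and $E=B/S^{1/2}$ to rewrite everything in terms of $H$ and $S$. The factors $N^{\eta(k/2+1)}$ and $N^{o(1)}$ come from the range $n>N^{1-\eta}$ and from logarithms and divisor-type losses.

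The main obstacle is matching the precise exponent $H^{-k/2+h}S^{1-h/2}$. The "$+1$" term in the square count loses a full factor of $S$, and that loss must be balanced against the length term by the right choice of how many coefficients to fix. The fractional dependence on $h$ suggests the balance is not attained at an integer $m$ with the crude bound. I would first try splitting the $s$-range dyadically at $|s|\approx HN^{m-1}/E$: on one side only the length term matters, and on the other the squares are sparse enough that at most one lies in $\cI$. If that still does not yield exponent $1-h/2$ on $S$, I would refine the count of squares in short intervals by a second-difference (van der Corput type) estimate. That estimate counts $d$ with $\|\sqrt{\cdot}\|$ small simultaneously over the $a_{m-1}$ variable, before freeing it.
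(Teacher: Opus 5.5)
\textbf{There is a genuine gap, in Step~3.} After fixing $n$, $s$, $a_m,\dots,a_k$ and $d$, you treat $a_1,\dots,a_{m-1}$ as free. But at that point $r=a_0+a_1n+\dots+a_{m-1}n^{m-1}=sd^2-c$ is a fixed integer. Since $|a_j|\le H\le N^{2/3}<n/2$, the whole tuple $(a_0,\dots,a_{m-1})$ is uniquely determined by $r$: reduce modulo $n$ to get $a_0$, subtract, divide by $n$, and repeat. This uniqueness is the key step of the paper's proof (``at most one tuple''). Without it your count collapses to
\[
N\cdot S\cdot H^{k-m+1}\cdot H^{m-1}=NSH^k ,
\]
in which the dependence on $m$ cancels. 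So no choice or interpolation of $m$ can help, and $R(E)\ll NS/H$ does not give~\eqref{eq:err-frm-large-d-small-H}. It even exceeds the trivial bound $N^{1+o(1)}$ once $S>H$, which is the situation for $h$ near $\varpi$ in the application. Your diagnosis that the ``$+1$'' term is the obstacle, and the proposed van der Corput refinement, are therefore aimed at the wrong loss.

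\textbf{How to repair it.} With this one correction your own framework closes the argument. Step~3 becomes
\[
\ll NH^{k-m+1}\bigl(S+HN^{m-1}E^{-1}\log N\bigr).
\]
Take $m$ to be the largest integer $\le k$ with $HN^{m-1}\le SE$. One checks $1\le m\le k$ and $HN^{m}>SE$ using $E<B$ and $E\ge N^{1-\eta}$. Then $H^{-m}<(SE/H)^{-h}$. Substituting $E=BS^{-1/2}$ and $B^{h}\asymp H^{(k+h)/2}$ gives
\[
R(E)\ll N^{1+o(1)}H^{-k/2+h/2}S^{1-h/2},
\]
which is at least as strong as required.

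\textbf{The paper's route.} The paper instead splits $s$ dyadically into $s\sim U=N^\theta$. It uses $|f(n)|\ge n^k/2$, rather than $d>E$, to bound the length of the interval for $d$ by $Hn^{m-k/2}U^{-1/2}$. It then chooses $m=m(\theta)$ via the floor so that this length is $<1$. The fractional exponent $1-h/2$ comes from $m+1>k/2+\theta/2-h-\eta(k/2+1)$ together with $H^{-\theta/2}=U^{-h/2}$.
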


For larger $D$, we will appeal to a result of Browning and Shparlinski based on the determinant method ~\cite[Lemma~2.5]{BrShp}.
\begin{lemma}[Individual-polynomial bound]
\label{lem:large-coef-control}
Let $0<\eta<1/7$, $N^\eta\le H\le N^A$ and $1\le E<B$. Then
\begin{equation}
\label{eq:err-frm-large-d-large-H}
 R(E)\ll
 \left(\frac{H^{1/2}N^{(k+1)/2}}{E}
       +\frac{HN^k}{E^2}\right)N^{o(1)}.
\end{equation}
\end{lemma}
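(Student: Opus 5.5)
The statement to prove is Lemma~\ref{lem:large-coef-control}. This is an individual-polynomial bound, so I would prove it one $f\in\cF_k^*(H)$ at a time and then average trivially over $f$.

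Fix $f\in\cF_k^*(H)$. Bounding $\mu^2(d)\le1$, $R(E)$ is at most the average over $f$ of
\[
\#\{(n,d): N^{1-\eta}<n\le N,\ E<d\le B,\ d^2\mid f(n)\}.
\]
For each such pair write $f(n)=sd^2$ with $s\ne0$. Since $|f(n)|\le B^2\ll HN^k$ and $d>E$, we get $|s|\le B^2/E^2\ll HN^k/E^2$.

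I would count solutions of $f(n)=sd^2$ in two regimes of $s$.

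\textbf{Large $s$.} Fix $s$. The curve $sY^2=f(X)$ is absolutely irreducible: $f$ is irreducible of degree $k\ge4$, hence not $s$ times a square. So the equation $f(n)=sd^2$ has $O(1)$ solutions $n$ for each $d$, and likewise $O(N^{o(1)})$ choices of $d$ for each $n$. A trivial count therefore gives $\ll N\cdot N^{o(1)}$ solutions for fixed $s$, which is too weak when summed over $s$.

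\textbf{The determinant method input.} Instead I would invoke \cite[Lemma~2.5]{BrShp}, which is precisely a determinant-method bound (Bombieri--Pila/Heath-Brown type) for
\[
\#\{(n,d): n\le N,\ E<d\le B,\ d^2\mid f(n)\}.
\]
Its output is the shape $\bigl(H^{1/2}N^{(k+1)/2}/E + HN^k/E^2\bigr)N^{o(1)}$, uniformly over $f$ with height at most $H\le N^A$.

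Heuristically the two terms arise as follows.
\begin{itemize}
\item The second term, $HN^k/E^2$, counts the number of possible values of $s$, with $O(N^{o(1)})$ points contributed per $s$ where the curve has few points.
\item The first term arises from points $(n,d)$ on the curves $sY^2=f(X)$ in a box of size $N\times B/\sqrt{|s|}$. Bombieri--Pila gives roughly $(N\cdot B/\sqrt s)^{1/k}$ per curve, or after grouping the pairs by auxiliary conics, $\sqrt{NB^2/E^2}\ll H^{1/2}N^{(k+1)/2}/E$.
\end{itemize}

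\textbf{Steps in order.}
\begin{enumerate}
\item Reduce to a single $f$ and drop $\mu^2$.
\item Restrict to $n\le N$, which only enlarges the count.
\item Dyadically decompose $d\sim D$ with $E<D\le B$.
\item Apply \cite[Lemma~2.5]{BrShp} for each $D$, obtaining $\bigl(H^{1/2}N^{(k+1)/2}/D+HN^k/D^2\bigr)N^{o(1)}$.
\item Sum over the $O(\log N)$ dyadic ranges. This sum is dominated by $D=E$.
\item Average over $f$.
\end{enumerate}

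\textbf{Main obstacle.} The main obstacle is checking that the hypotheses of \cite[Lemma~2.5]{BrShp} match the present setting. That lemma requires irreducibility of $f$ and $\deg f=k$, and these hold because $f\in\cF_k^*(H)$. Its implied constant must be uniform in the coefficients of $f$; it is, with only an $H^{o(1)}=N^{o(1)}$ loss since $H\le N^A$.

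If the cited lemma is only stated with $d$ in a dyadic range, or with $d\le B$ replaced by a bound in terms of $\max|f(n)|$, I would redo the dyadic bookkeeping. In the worst case I would fall back on repeating its determinant-method argument for the affine curves $sY^2=f(X)$ directly, and then sum over $s$.
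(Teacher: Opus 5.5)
Your proposal is correct and follows the paper's proof: fix $f\in\cF_k^*(H)$, write $|f(n)|=sd^2$ with $s\le (k+1)HN^k/E^2$, apply the determinant-method bound of Browning--Shparlinski~\cite[Lemma~2.5]{BrShp}, and average over $f$. The only difference is cosmetic: the paper uses that lemma in the form $\#\{(n,d,s): n\le N,\ s\le S,\ f(n)=sd^2\}\ll (N^{1/2}S^{1/2}+S)(HN)^{o(1)}$ and substitutes $S=(k+1)HN^k/E^2$ directly, so your dyadic decomposition in $d$ and your heuristic paragraphs are not needed.
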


\begin{proof}[Proof of Proposition~\ref{prop:large-squares}]
Before proving Lemma~\ref{lem:small-coef-control} and Lemma~\ref{lem:large-coef-control}, we show how they imply Proposition~\ref{prop:large-squares}.

By Lemma~\ref{lem:prelim}, it suffices to prove
$R(E)\ll N^{1-\eta+o(1)}$. There is nothing to prove if $E\ge B$.
If $H>N^{2/3}$, Lemma~\ref{lem:large-coef-control} gives
\[
 R(E)\ll N^{1-\eta+o(1)}+N^{1-2\eta+o(1)}.
\]
Suppose now that $H=N^h\le N^{2/3}$. Write
$e=(k+1)h-(k+2)\eta$, so that $E=N^e$. Since
$h\ge\varpi+\varepsilon$, we have
\[
 e\ge(k+1)\varpi+\frac{5k+3}{7}\varepsilon>1.
\]
Lemma~\ref{lem:small-coef-control} gives
$R(E)\ll N^{F(h)+o(1)}$, where
\begin{equation}
\label{eq:small-tail-exponent}
 F(h)=1+k(1-3h+h^2)-h+\frac32h^2
          +\eta(k+2)\left(2-\frac h2\right).
\end{equation}
For $\varpi+\varepsilon\le h\le2/3$, we have
\[
 1-3h+h^2\le-\frac53\varepsilon,
 \qquad -h+\frac32h^2\le0,
 \qquad (k+2)\left(2-\frac h2\right)\le3k.
\]
The first inequality follows from $1-3\varpi+\varpi^2=0$ and
$2h-3\le-5/3$. Therefore
\[
 F(h)\le1-\frac53k\varepsilon+3k\eta
       =1-\frac{17}{21}k\varepsilon
       \le1-\eta.
\]
This proves the required bound.
\end{proof}

\subsection{Small values of $H$: Proof of Lemma~\ref{lem:small-coef-control}}
\begin{proof}
If $d>E$ and $d^2\mid f(n)$, write $|f(n)|=sd^2$ with
$1\le s\le S$.
Divide $1\le s\le S$ into $O(\log N)$ intervals
$U\le s<2U$, where $U=N^\theta\ge1$.
For each such interval put
\begin{equation}
\label{eq:mdef}
 m=\left\lfloor
      \frac{k}{2}+\frac{\theta}{2}-h
      -\eta\left(\frac{k}{2}+1\right)
    \right\rfloor.
\end{equation}
We have $0\le m\le k-1$ for sufficiently large $N$.
Indeed, the expression inside the floor is at least
$k/2-2/3-(k/2+1)/7>0$. Also,
\[
 \theta\le h+k-2+2\eta+O(1/\log N)
\]
by $E\ge N^{1-\eta}$, so the same expression is at most
$k-1-h/2-\eta k/2+O(1/\log N)<k-1$.

Fix $n,s,a_{m+1},\ldots,a_k$, with
$N^{1-\eta}<n\le N$, $U\le s<2U$, $|a_j|\le H$ and $a_k\ne0$.
Write
\begin{equation}
\label{eq:fsquares}
 f(X)=a_0+a_1X+\cdots+a_kX^k,
 \qquad
 \widetilde f(X)=a_{m+1}X^{m+1}+\cdots+a_kX^k.
\end{equation}
Since $H\le N^{2/3}$ and $\eta<1/7$, we have $H/n=o(1)$.
Consequently,
\begin{equation}
\label{eq:tildefbounds}
 |\widetilde f(n)|\ge\frac{n^k}{2},
 \qquad
 |f(n)-\widetilde f(n)|\ll Hn^m,
\end{equation}
and every admissible $f(n)$ has the sign of $a_k$.
The possible values of $\sqrt{|f(n)|/s}$ lie in an interval of length
\begin{align}
\label{eq:square-interval-length}
 \ll Hn^{m-k/2}U^{-1/2}
 &\ll N^{h+m-k/2+\eta k/2-\theta/2}\nonumber\\
 &\ll N^{-\eta}.
\end{align}
In the second inequality we used $0\le m\le k-1$ and
$N^{1-\eta}<n\le N$; the last follows from~\eqref{eq:mdef}.
For large $N$, this interval contains at most one positive integer $d$. For fixed $n,s,a_{m+1},\dots,a_k$, the value of $d$ fixes $f(n)$. If two coefficient tuples $a_0,\dots,a_k$ and $a_0',\dots,a_k'$ give this same value, then
\[
 \sum_{j=0}^m(a_j-a'_j)n^j=0.
\]
Since $H/n=o(1)$, we have $2H<n$ for sufficiently large $N$. Reducing the displayed identity modulo $n$ and then dividing successively by $n$ shows that all the coefficient differences vanish. Thus there is at most one tuple
$(a_0,\ldots,a_m)$ for each choice of the fixed variables $n,s,a_k,\dots,a_{m+1}$.

There are $O(H^{k-m}NU)$ ways to fix $n,s,a_k,\dots,a_{m+1}$. The result follows after some rearranging, since 
\begin{align}
\label{eq:HNS}
H^{k-m}NU \ll H^{k+1}NUH^{-m-1}
 &\ll H^{k+1}NH^{-k/2+h+\eta(k/2+1)}U^{1-h/2}.
\end{align}
Here we used $m+1>k/2+\theta/2-h-\eta(k/2+1)$ and
$H^{-\theta/2}=U^{-h/2}$. Summing the dyadic intervals and using
$U\le S$ and $1-h/2>0$ proves~\eqref{eq:err-frm-large-d-small-H}.
\end{proof}

\subsection{Larger values of $H$: Proof of Lemma~\ref{lem:large-coef-control}}
We use the following consequence of
Browning and Shparlinski~\cite[Lemma~2.5]{BrShp}.
\begin{lemma}
\label{lem:det-met}
Let $f\in\cF_k^*(H)$ and $S\ge1$. Then
\[
 \#\{(n,d,s)\in\N^3:n\le N,\ s\le S,\ f(n)=sd^2\}
 \ll (N^{1/2}S^{1/2}+S)(HN)^{o(1)}.
\]
\end{lemma}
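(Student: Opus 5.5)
The plan is to obtain Lemma~\ref{lem:det-met} as a direct specialisation of \cite[Lemma~2.5]{BrShp}. Most of the work is checking that its hypotheses hold for $f\in\cF_k^*(H)$ and converting its parameters to ours; no new counting should be needed.

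\textbf{Step 1 (hypotheses).} Since $f\in\cF_k^*(H)$ is irreducible over $\Z$ of degree $k\ge4$, it is squarefree as a polynomial and has no repeated roots. Hence for every fixed $s\ne0$ the curve $sY^2=f(X)$ is absolutely irreducible of genus $\ge1$, which is the setting of the determinant-method count in \cite{BrShp}. Irreducibility also gives $f(n)\ne0$ for all integers $n$. So every $n\le N$ contributes triples with $s\ge1$, $d\ge1$, and the sign of $f(n)$ is fixed for $n$ beyond an absolute constant. The finitely many small $n$, and the possible sign change, contribute at most $S^{o(1)}$ each by the divisor bound. If needed, we also record that $|f(n)|\le B^2\ll HN^k$, so the heights of all points are polynomial in $HN$. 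This is what lets every loss be absorbed into $(HN)^{o(1)}$.

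\textbf{Step 2 (matching parameters).} If \cite[Lemma~2.5]{BrShp} is stated for $s$ or $d$ in a dyadic range, we split $1\le s\le S$ into $O(\log (HN))$ ranges $U\le s<2U$. In each range, $d\asymp (|f(n)|/U)^{1/2}$ lies in a range determined by $U$ and $n$, and $n$ can be split dyadically as well. The lemma should then give $(N^{1/2}U^{1/2}+U)(HN)^{o(1)}$ per range, and summing over $O((\log HN)^2)$ ranges gives the stated bound.

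If instead the lemma is phrased as a bound for $\#\{(n,d): n\le N,\ d\ge D,\ d^2\mid f(n)\}$, we translate via $s=|f(n)|/d^2\le HN^k/D^2$, choosing $D$ so that this is at most $S$. The two terms then correspond to $N^{1/2}S^{1/2}$ and $S$ in our formulation.

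\textbf{Main obstacle.} The only real difficulty is bookkeeping: confirming that the uniformity in \cite{BrShp} is in terms of $H$ (and not merely the degree), so that the implied dependence is only $(HN)^{o(1)}$ and not a power of $H$. I would first check the source statement to see whether this uniformity is explicit. If the coefficient dependence were not uniform, the fallback is to redo the determinant-method count from scratch. For each $s$ one counts integer points on $sY^2=f(X)$ in the box $X\le N$, $Y\ll (HN^k/s)^{1/2}$ via Bombieri--Pila/Heath-Brown. One then handles large $s$ by noting that for each $n$ and each divisor $d^2\mid f(n)$ there is one $s$, giving the $S$ term through the divisor bound on $f(n)$.
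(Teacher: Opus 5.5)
Your proposal matches the paper: there too the lemma is obtained directly from \cite[Lemma~2.5]{BrShp}, with no further argument. Two of your side remarks are inaccurate, though neither affects the citation argument. First, the sign of $f(n)$ is guaranteed constant only for $n>H+1$, not beyond an absolute constant; this is irrelevant anyway, since only triples with $f(n)=sd^2>0$ are counted. Second, your fallback of bounding points on each curve $sY^2=f(X)$ separately gives about $SN^{1/2}$ rather than $N^{1/2}S^{1/2}$, so it would not reproduce the lemma.
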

\begin{proof}[Proof of Lemma~\ref{lem:large-coef-control}]
For $f\in\cF_k^*(H)$ we have $f(n)\ne0$ for each $n\le N$. If $d>E$ and
$d^2\mid f(n)$, then $|f(n)|=sd^2$ for some
$1\le s\le (k+1)HN^k/E^2$. The desired result follows after applying Lemma~\ref{lem:det-met}
 then averaging over $\cF_k^*(H)$.

\end{proof}

\section{Background on the geometry of numbers}
\label{sec:geometry}
All lattices in this section have full rank, and all convex bodies
are symmetric about the origin and have nonempty interior. We write $\operatorname{vol}$ for Lebesgue measure.
For a lattice $\cL\subseteq\R^r$ and a convex body $\cB$, define
\[
 \lambda_i=\inf\{\lambda>0:
 \cL\cap\lambda\cB\text{ contains $i$ linearly independent points}\}.
\]
Minkowski's second theorem gives the following estimate;
see~\cite[Theorem~3.30]{TaoVu}.
\begin{lemma}
\label{lem:mst}
With notation as above,
\[
 \frac{1}{\lambda_1\cdots\lambda_r}
 \asymp_r\frac{\operatorname{vol}(\cB)}{\det\cL}.
\]
\end{lemma}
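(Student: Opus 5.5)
The statement is Minkowski's second theorem in the form $\lambda_1\cdots\lambda_r\asymp_r \det\cL/\operatorname{vol}(\cB)$. The plan is to reduce to the case $\cL=\Z^r$ and then prove the two inequalities separately. Choose $A\in GL_r(\R)$ with $A\Z^r=\cL$. Replacing $\cB$ by $A^{-1}\cB$ leaves the successive minima unchanged, since $\cL\cap\lambda\cB=A(\Z^r\cap\lambda A^{-1}\cB)$. It also replaces both $\det\cL$ and $\operatorname{vol}(\cB)$ by their quotients by $|\det A|$. So it suffices to treat $\cL=\Z^r$ with $\det\cL=1$.

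For the lower bound $\lambda_1\cdots\lambda_r\gg_r 1/\operatorname{vol}(\cB)$, pick linearly independent lattice vectors $v_1,\dots,v_r$ with $v_i\in\lambda_i\overline{\cB}$. By symmetry and convexity, the cross-polytope $\operatorname{conv}(\pm v_i/\lambda_i)$ lies in $\overline{\cB}$. Its volume is
\[
\frac{2^r}{r!}\,\frac{|\det(v_1,\dots,v_r)|}{\lambda_1\cdots\lambda_r}\ \ge\ \frac{2^r}{r!\,\lambda_1\cdots\lambda_r},
\]
because $\det(v_1,\ldots,v_r)$ is a nonzero integer. Since $\partial\cB$ has measure zero, this gives $\operatorname{vol}(\cB)\ge 2^r/(r!\,\lambda_1\cdots\lambda_r)$.

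The upper bound $\lambda_1\cdots\lambda_r\ll_r 1/\operatorname{vol}(\cB)$ is the substantive half. Here I would prove the weaker but sufficient inequality $\lambda_1\cdots\lambda_r\operatorname{vol}(\cB)\le 2^r r!$ rather than Minkowski's sharp $2^r$.

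1. Choose $v_i$ as above and a basis $b_1,\dots,b_r$ of $\Z^r$ adapted to the flag $V_i=\operatorname{span}(v_1,\dots,v_i)$. This means $\operatorname{span}(b_1,\ldots,b_i)=V_i$, which is possible since each $V_i\cap\Z^r$ is a primitive sublattice.
2. Apply a unimodular change of coordinates so that $b_i=e_i$. The minima and the volume are unchanged, and now $V_i=\operatorname{span}(e_1,\dots,e_i)$.
3. Define the linear map $T(x)=\sum_i x_i'e_i/\lambda_i$, where $x_i'$ are the coordinates of $x$ with respect to the Gram--Schmidt-type decomposition along this flag.
4. Argue, in the style of Blichfeldt, that the translates of $\frac12 T\cB$ by integer points are disjoint. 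By the minimality defining $\lambda_i$, any lattice point in $\lambda\cB$ with $\lambda<\lambda_i$ lies in $V_{i-1}$.
5. Disjointness gives $\operatorname{vol}(\tfrac12 T\cB)\le 1$, so $\operatorname{vol}(\cB)\le 2^r\lambda_1\cdots\lambda_r\cdot(\text{const})$, which is the wrong direction. Reverse the roles so that the scaling makes the body expand.

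That standard disjointness argument is delicate. As a fallback I would use a cleaner route that suffices up to constants depending on $r$: induction on dimension via projection.

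1. Let $\pi$ be the projection along $v_1$ onto the quotient $\R^r/\R v_1$. The image $\pi(\Z^r)$ is a lattice of determinant $1/|v_1|_{\text{prim}}$, where $v_1$ may be taken primitive.
2. The successive minima of $\pi(\cB)$ with respect to $\pi(\Z^r)$ satisfy $\mu_{i}\le\lambda_{i+1}$. Since $v_1$ is a shortest vector, one also has $\mu_i\gg_r\lambda_{i+1}$: lift a short projected vector and subtract a bounded multiple of $v_1$.
3. Apply the induction hypothesis to $(\pi(\Z^r),\pi(\cB))$. This gives $\mu_1\cdots\mu_{r-1}\operatorname{vol}(\pi\cB)\ll_r\det(\pi\Z^r)$.
4. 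Combine this with the slicing estimate $\operatorname{vol}(\cB)\asymp_r\operatorname{vol}(\pi\cB)\cdot\operatorname{length}(\cB\cap\R v_1)$ (the Brunn--Minkowski/convexity comparison) and with $\operatorname{length}(\cB\cap\R v_1)\asymp|v_1|/\lambda_1$ measured in the induced normalization.

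The main obstacle is bookkeeping the comparison $\mu_i\asymp\lambda_{i+1}$ and the slice volume estimate, since only constants depending on $r$ are required. Alternatively, since the paper cites~\cite[Theorem~3.30]{TaoVu}, one could simply invoke Minkowski's second theorem directly.
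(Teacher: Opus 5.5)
The paper does not prove this lemma. It quotes Minkowski's second theorem from Tao--Vu (Theorem~3.30) in its sharp form $2^r/r!\le\lambda_1\cdots\lambda_r\operatorname{vol}(\cB)/\det\cL\le 2^r$, which is the ``alternative'' you mention at the end. You instead give a self-contained proof. It is correct, provided the hard direction rests on your projection induction.

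Your reduction to $\cL=\Z^r$ and your cross-polytope lower bound are complete. Delete the first, Blichfeldt-style sketch for the upper bound. As written it is not an argument: step~5 concedes the inequality points the wrong way, and ``reverse the roles'' is not a step. The classical packing proof uses a nonlinear, fibrewise scaling, not a linear $T$.

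The induction closes with less bookkeeping than you fear.
\begin{itemize}
\item \emph{Determinant.} A vector $v_1$ attaining $\lambda_1$ is automatically primitive. Hence $\det\pi(\Z^r)=1/\|v_1\|$ for the orthogonal projection $\pi$ onto $v_1^\perp$.
\item \emph{Comparison of minima.} Suppose $w_1,\ldots,w_i\in\Z^r$ have independent projections in $\mu_i\pi(\overline\cB)$. Pick $x_j\in\mu_i\overline\cB$ with $w_j-x_j=s_jv_1$, and an integer $m_j$ with $|s_j-m_j|\le 1/2$. Then $v_1,w_1-m_1v_1,\ldots,w_i-m_iv_1$ are $i+1$ independent lattice vectors in $(\mu_i+\lambda_1/2)\overline\cB$. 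So $\lambda_{i+1}\le\mu_i+\lambda_1/2$, and hence $\lambda_{i+1}/2\le\mu_i\le\lambda_{i+1}$.
\item \emph{Slicing.} You only need the one-sided bound $\operatorname{vol}(\cB)\le\operatorname{vol}_{r-1}(\pi\cB)\cdot 2\|v_1\|/\lambda_1$. It holds because in a symmetric convex body the chord through the origin is the longest in its direction, so Brunn--Minkowski is not needed.
\end{itemize}
Combining these with the induction hypothesis gives
\[
 \lambda_1\cdots\lambda_r\operatorname{vol}(\cB)\le 2^r\|v_1\|\,\mu_1\cdots\mu_{r-1}\operatorname{vol}_{r-1}(\pi\cB)\le 2^rC_{r-1}\|v_1\|\det\pi(\Z^r)=2^rC_{r-1},
\]
so $C_r=2^{r(r+1)/2}$.

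The citation buys brevity and the sharp constant $2^r$. Your route gives an elementary proof with $r$-dependent constants. That is all the paper needs, since the lemma enters only through $\asymp_r$ in Lemmas~\ref{lem:scales} and~\ref{lem:latticedual}.
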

We also use the standard lattice-point estimate
\cite[Exercise~3.5.6]{TaoVu}.
\begin{lemma}
\label{lem:lattice}
With notation as above,
\[
 \#(\cL\cap\cB)\asymp_r
 \prod_{j=1}^r\max\{1,\lambda_j^{-1}\}.
\]
\end{lemma}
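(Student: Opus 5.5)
We prove Lemma~\ref{lem:lattice} by reducing to a box adapted to a directional basis, using Lemma~\ref{lem:mst} together with the standard construction of a basis compatible with the successive minima. Choose linearly independent $v_1,\dots,v_r\in\cL$ with $v_j\in\lambda_j\cB$. By a lemma of Mahler (or the construction in \cite[Lemma~3.36]{TaoVu}) there is a basis $b_1,\dots,b_r$ of $\cL$ with $b_j\in C_r\lambda_j\cB$ and such that $\cB$ is comparable to the ``generalized box''
\[
 P=\Bigl\{\textstyle\sum_j t_jb_j: |t_j|\le \lambda_j^{-1}\Bigr\},
\]
in the sense that $c_r P\cap\cL\subseteq \cB\cap\cL$ and $\cB\cap\cL\subseteq C_rP\cap\cL$ after shrinking/enlarging by constants depending only on $r$. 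The point is that Minkowski's second theorem (Lemma~\ref{lem:mst}) guarantees $\operatorname{vol}(P)\asymp_r\operatorname{vol}(\cB)$, since $\operatorname{vol}(P)=2^r\det\cL/(\lambda_1\cdots\lambda_r)$ up to the basis distortion, and the inclusion $P\subseteq C_r\cB$ is immediate from $b_j\in C_r\lambda_j\cB$ and convexity. The reverse inclusion (that $\cB$ is contained in a bounded dilate of $P$) follows from convexity plus the volume comparison: if some point of $\cB$ had a large coordinate $t_j$ relative to $\lambda_j^{-1}$, its convex hull with $P$ would have volume exceeding $\operatorname{vol}(\cB)$ by too large a factor.

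Given the comparison, counting is elementary: lattice points of $P$ are exactly integer tuples $(t_1,\dots,t_r)$ with $|t_j|\le\lambda_j^{-1}$, and the number of integers in $[-x,x]$ is $\asymp\max\{1,x\}$. Dilating by constants $c_r,C_r$ changes each factor $\max\{1,\lambda_j^{-1}\}$ by at most a constant, giving
\[
 \#(\cL\cap\cB)\asymp_r\prod_{j=1}^r\max\{1,\lambda_j^{-1}\}.
\]

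The main obstacle is the comparison of $\cB$ with $P$, specifically the lower bound: one must ensure that the basis is chosen so that the coordinate directions really witness the minima (a basis merely spanning a finite-index sublattice would only give the count up to the index, which is $O_r(1)$ and hence harmless). I would handle this by first working with the sublattice generated by $v_1,\dots,v_r$, whose index in $\cL$ is $\ll_r 1$ by Lemma~\ref{lem:mst}, proving both bounds there, and then transferring to $\cL$ by covering with $O_r(1)$ translates, using that each translate meets $2\cB$ in at most as many points as the sublattice meets $4\cB$.
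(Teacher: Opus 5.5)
Your proof is correct. The paper gives no argument for this lemma; it is quoted from \cite[Exercise~3.5.6]{TaoVu}. So your proposal is a self-contained replacement rather than a variant of a proof in the text.

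What you actually prove is stronger than the count: a two-sided comparison $c_rP\subseteq\cB\subseteq C_rP$ with the generalized box built on a basis adapted to the minima (a discrete John-type statement). From it the count in $\cB$, and in any fixed dilate of $\cB$, is read off coordinatewise. The key point is that $\operatorname{vol}(P)=2^r\det\cL/(\lambda_1\cdots\lambda_r)$ holds exactly for a basis of $\cL$, so Minkowski's second theorem gives $\operatorname{vol}(P)\asymp_r\operatorname{vol}(\cB)$. Your cone argument then works: in coordinates normalising $P$ to $[-1,1]^r$, a point of $\cB$ with a coordinate of size $T$ spans, together with $c_rP\subseteq\cB$, a pyramid of volume $\gg_r T\operatorname{vol}(P)$, which forces $T\ll_r1$.

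The more common textbook proof is asymmetric. The lower bound just counts the points $\sum_j m_jv_j$ with $|m_j|\le 1/(r\lambda_j)$. The upper bound restricts to the span $V$ of $\cL\cap\cB$, whose dimension is $\#\{j:\lambda_j\le1\}$, and uses a packing argument plus Minkowski's second theorem for $\cL\cap V$. It needs no adapted basis and no reverse inclusion. Your route costs either Mahler's basis lemma or your sublattice variant, but in exchange gives the structural comparison.

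Your last paragraph in fact shows Mahler's lemma is dispensable. The directional vectors $v_j$ form a genuine basis of $\cL'=\langle v_1,\dots,v_r\rangle$, which has the same successive minima and index $[\cL:\cL']\ll_r1$ by Lemma~\ref{lem:mst}. The box comparison then applies verbatim to $\cL'$, and covering $\cL$ by $O_r(1)$ translates of $\cL'$ finishes.

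One correction to your framing: passing to a finite-index sublattice is harmless for the lower bound, since $\cL'\subseteq\cL$. It is the upper bound where the index and the inclusion $\cB\subseteq C_rP$ genuinely matter.
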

The next result will be used to rescale our lattice point counts.
\begin{lemma}
\label{lem:scales}
For $0<t\le1$,
\[
 \#(\cL\cap\cB)\ll
 \frac{\operatorname{vol}(\cB)}{\det\cL}+
 t^{-(r-1)}\#(\cL\cap t\cB).
\]
\end{lemma}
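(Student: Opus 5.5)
We will derive the inequality from Lemma~\ref{lem:lattice} together with Lemma~\ref{lem:mst}, by comparing the successive minima of $\cB$ with those of $t\cB$. Let $\lambda_1\le\cdots\le\lambda_r$ be the successive minima of $\cL$ with respect to $\cB$. Then the successive minima with respect to $t\cB$ are $\lambda_j/t$. Lemma~\ref{lem:lattice} gives
\[
 \#(\cL\cap\cB)\asymp\prod_{j=1}^r\max\{1,\lambda_j^{-1}\},
 \qquad
 \#(\cL\cap t\cB)\asymp\prod_{j=1}^r\max\{1,t\lambda_j^{-1}\}.
\]
We will split into two cases according to whether $\lambda_r\le1$ or $\lambda_r>1$.

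\emph{Case 1: $\lambda_r\le 1$.} Then all $\lambda_j\le1$. The first product equals $(\lambda_1\cdots\lambda_r)^{-1}$, and by Lemma~\ref{lem:mst} this is $\asymp\operatorname{vol}(\cB)/\det\cL$. So the first term on the right already suffices.

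\emph{Case 2: $\lambda_r>1$.} Then the factor $j=r$ of the first product equals $1$, so
\[
 \#(\cL\cap\cB)\asymp\prod_{j=1}^{r-1}\max\{1,\lambda_j^{-1}\}.
\]
We will use the elementary inequality
\[
 \max\{1,x\}\le t^{-1}\max\{1,tx\}
 \qquad(x>0,\ 0<t\le1).
\]
It holds because if $tx\ge1$ both sides equal $x$, and otherwise the right side equals $t^{-1}\ge1$ and also $t^{-1}\ge x$. Applying it to each of the $r-1$ remaining factors, and noting that the $r$-th factor of the second product is at least $1$, gives
\[
 \prod_{j=1}^{r-1}\max\{1,\lambda_j^{-1}\}
 \le t^{-(r-1)}\prod_{j=1}^{r}\max\{1,t\lambda_j^{-1}\}
 \ll t^{-(r-1)}\#(\cL\cap t\cB).
\]

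Adding the bounds from the two cases proves the lemma, with implied constants depending only on $r$. The only point needing care is the saving of one power of $t$ in Case~2, which comes precisely from the factor $j=r$ being $1$. There is no real obstacle beyond checking that Lemma~\ref{lem:lattice} applies to $t\cB$, which is again a symmetric convex body with nonempty interior.
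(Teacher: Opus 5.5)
Your proof is correct and is essentially the paper's argument. Both split on whether $\lambda_r\le1$, using Lemmas~\ref{lem:mst} and~\ref{lem:lattice} to get the volume term, or $\lambda_r>1$, where the $r$-th factor drops out and $\max\{1,x\}\le t^{-1}\max\{1,tx\}$ is applied to the remaining $r-1$ factors, using that the successive minima with respect to $t\cB$ are $\lambda_j/t$.
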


\begin{proof}
If $\lambda_r\le1$, Lemmas~\ref{lem:mst} and~\ref{lem:lattice} give
$$\#(\cL\cap\cB)\ll\operatorname{vol}(\cB)/\det\cL.$$
If $\lambda_r>1$, then by Lemma~\ref{lem:lattice}
\[
 \#(\cL\cap\cB)\asymp_r
 \prod_{j=1}^{r-1}\max\{1,\lambda_j^{-1}\}.
\]
Using $\max(1,\lambda_j^{-1})\le t^{-1}\max(1,t/\lambda_j)$
for $j<r$, and noting that the $j$-th successive minima of $\cL$ with respect to $t\cB$ is $\lambda_j/t$,
we get
\[
 \#(\cL\cap\cB)
 \ll_r t^{-(r-1)}
 \prod_{j=1}^{r-1}\max(1,t/\lambda_j)
 \ll_r t^{-(r-1)}\#(\cL\cap t\cB).
\]
\end{proof}

Define the dual lattice and polar body by
\begin{equation}
\label{eq:Gammastar}
 \cL^*=\{y\in\R^r:\langle y,z\rangle\in\Z
                       \text{ for every }z\in\cL\}
\end{equation}
and
\begin{equation}
\label{eq:Dstar}
 \cB^*=\{y\in\R^r:|\langle y,z\rangle|\le1
                       \text{ for every }z\in\cB\}.
\end{equation}
We use the following transference estimate; see Banaszczyk~\cite{Ba}.
\begin{lemma}[Transference]
\label{lem:transfer}
If $\lambda_j^*$ are the successive minima of $\cL^*$ with respect
to $\cB^*$, then
\[
 \lambda_j\lambda_{r-j+1}^*\ll_r1\qquad(1\le j\le r).
\]
\end{lemma}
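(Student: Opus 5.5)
The plan is to first prove the reverse inequality $\lambda_j\lambda_{r-j+1}^*\ge1$ for every $j$, and then combine it with Minkowski's second theorem (Lemma~\ref{lem:mst}) applied to both $(\cL,\cB)$ and $(\cL^*,\cB^*)$. Since the product over $j$ of $\lambda_j\lambda^*_{r-j+1}$ will turn out to be $\ll_r1$, and each factor is at least $1$, each individual factor must be $\ll_r1$.

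\emph{Step 1: the lower bound.} Fix $j$. Choose linearly independent $z_1,\ldots,z_j\in\cL\cap\lambda_j\cB$ and linearly independent $y_1,\ldots,y_{r-j+1}\in\cL^*\cap\lambda^*_{r-j+1}\cB^*$. Because $j+(r-j+1)>r$, the span of the $y$'s cannot lie inside the orthogonal complement of $\operatorname{span}(z_1,\ldots,z_j)$, which has dimension $r-j$. Hence some pair satisfies $\langle y_a,z_b\rangle\neq0$. By the definition~\eqref{eq:Gammastar} this inner product is a nonzero integer, so its absolute value is at least $1$. On the other hand, $z_b/\lambda_j\in\cB$ and $y_a/\lambda^*_{r-j+1}\in\cB^*$, so~\eqref{eq:Dstar} gives $|\langle y_a,z_b\rangle|\le\lambda_j\lambda^*_{r-j+1}$. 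Therefore $\lambda_j\lambda^*_{r-j+1}\ge1$. Strictly speaking, the infima defining the successive minima are attained by compactness and discreteness, or one can argue with $\lambda_j+\epsilon$ and let $\epsilon\to0$.

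\emph{Step 2: the product.} By Lemma~\ref{lem:mst},
\[
 \prod_{j=1}^r\lambda_j\asymp_r\frac{\det\cL}{\operatorname{vol}(\cB)},
 \qquad
 \prod_{j=1}^r\lambda_j^*\asymp_r\frac{\det\cL^*}{\operatorname{vol}(\cB^*)}
 =\frac{1}{\det\cL\,\operatorname{vol}(\cB^*)}.
\]
Here we use $\det\cL^*=(\det\cL)^{-1}$, which one sees from a basis matrix $A$ of $\cL$, since $(A^{T})^{-1}$ is a basis matrix of $\cL^*$. Multiplying the two relations gives
\[
 \prod_{j=1}^r\lambda_j\lambda^*_{r-j+1}\asymp_r\bigl(\operatorname{vol}(\cB)\operatorname{vol}(\cB^*)\bigr)^{-1}.
\]
It therefore suffices to have a lower bound $\operatorname{vol}(\cB)\operatorname{vol}(\cB^*)\gg_r1$. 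Given this, Step 1 yields $1\le\lambda_j\lambda^*_{r-j+1}\le\prod_i\lambda_i\lambda^*_{r-i+1}\ll_r1$.

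\emph{Main obstacle: the volume product.} The only non-formal input is the lower bound for the Mahler volume product of a symmetric convex body. Since constants may depend on $r$, no sharp (Bourgain--Milman) result is needed; Mahler's elementary bound $\operatorname{vol}(\cB)\operatorname{vol}(\cB^*)\ge4^r/(r!)^2$ suffices. I would prove it as follows. Both sides are invariant under $\cB\mapsto T\cB$, $\cB^*\mapsto T^{-T}\cB^*$ for linear $T$, so by John's theorem we may assume $\mathbb B\subseteq\cB\subseteq\sqrt r\,\mathbb B$, where $\mathbb B$ is the Euclidean unit ball. Polarity then gives $r^{-1/2}\mathbb B\subseteq\cB^*\subseteq\mathbb B$. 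Hence
\[
 \operatorname{vol}(\cB)\operatorname{vol}(\cB^*)\ge\operatorname{vol}(\mathbb B)^2r^{-r/2}\gg_r1.
\]
This is crude but adequate. The proof is then completed by citing, or briefly recording, John's theorem.
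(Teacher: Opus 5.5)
Your proof is correct, but it is not what the paper does. The paper gives no argument for Lemma~\ref{lem:transfer} and simply cites Banaszczyk~\cite{Ba}, whose theorem is far stronger than needed: it gives $\lambda_j\lambda^*_{r-j+1}\ll r(1+\log r)$ with an absolute implied constant, via considerably deeper analytic methods.

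What you have written is essentially Mahler's classical transference argument, and it has three ingredients:
\begin{itemize}
\item the dimension count, which gives $\lambda_j\lambda^*_{r-j+1}\ge1$;
\item Minkowski's second theorem (Lemma~\ref{lem:mst}) applied to both $(\cL,\cB)$ and $(\cL^*,\cB^*)$, together with $\det\cL^*=(\det\cL)^{-1}$;
\item a lower bound for the volume product. You obtain this correctly, either from Mahler's inequality $\operatorname{vol}(\cB)\operatorname{vol}(\cB^*)\ge4^r/(r!)^2$, or from John's theorem combined with the invariance $(T\cB)^*=T^{-T}\cB^*$ and the order-reversing property of polarity.
\end{itemize}
With the explicit constant $2^r$ in Minkowski's upper bound and Mahler's volume inequality, your argument yields $\lambda_j\lambda^*_{r-j+1}\le(r!)^2$. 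This is factorial rather than nearly linear in $r$. That loss is irrelevant here, since $r=k+1$ is fixed and all implied constants may depend on $k$. Moreover, the only use of the lemma, in Lemma~\ref{lem:latticedual}, needs just the upper bound $\lambda_j\ll_r(\lambda^*_{r-j+1})^{-1}$, which is exactly what your Step~2 delivers.

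Your route buys self-containment: it relies only on Lemma~\ref{lem:mst}, which the paper already invokes, plus one standard convexity fact. It also records the reverse inequality, which the paper never needs. The citation buys brevity and an essentially optimal dependence on the dimension, neither of which matters for this application.
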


\begin{lemma}
\label{lem:latticedual}
With notation as above
\[
 \#(\cL\cap\cB)\ll_r
 \frac{\operatorname{vol}(\cB)}{\det\cL}
 \#(\cL^*\cap\cB^*).
\]
\end{lemma}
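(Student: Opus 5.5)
We prove Lemma~\ref{lem:latticedual} by expressing both lattice point counts through successive minima, using Lemma~\ref{lem:lattice}, and then linking the two sets of minima with Minkowski's second theorem (Lemma~\ref{lem:mst}) and the transference bound of Lemma~\ref{lem:transfer}.

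Let $\lambda_1\le\cdots\le\lambda_r$ be the successive minima of $\cL$ with respect to $\cB$, and $\lambda_1^*\le\cdots\le\lambda_r^*$ those of $\cL^*$ with respect to $\cB^*$. By Lemma~\ref{lem:mst},
\[
 \frac{\operatorname{vol}(\cB)}{\det\cL}\asymp_r\prod_{j=1}^r\lambda_j^{-1}.
\]
By Lemma~\ref{lem:lattice},
\[
 \#(\cL^*\cap\cB^*)\asymp_r\prod_{j=1}^r\max\{1,(\lambda_j^*)^{-1}\}.
\]
Using Lemma~\ref{lem:lattice} once more on the left-hand side, it suffices to show
\[
 \prod_{j=1}^r\max\{1,\lambda_j^{-1}\}\ll_r\prod_{j=1}^r\lambda_j^{-1}\prod_{j=1}^r\max\{1,(\lambda_j^*)^{-1}\}.
\]

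Divide both sides by $\prod_j\lambda_j^{-1}$. The left-hand side then becomes $\prod_{j}\max\{\lambda_j,1\}$, so the inequality is equivalent to
\[
 \prod_{j:\,\lambda_j>1}\lambda_j\ll_r\prod_{j=1}^r\max\{1,(\lambda_j^*)^{-1}\}.
\]
Reindex the right-hand side by $i=r-j+1$. Lemma~\ref{lem:transfer} gives $\lambda_j\lambda_{r-j+1}^*\ll_r1$, that is, $(\lambda^*_{r-j+1})^{-1}\gg_r\lambda_j$. Hence for every $j$,
\[
 \max\{1,(\lambda_{r-j+1}^*)^{-1}\}\ge\max\{1,c_r\lambda_j\}\ge c_r\max\{1,\lambda_j\}
\]
for some constant $c_r\in(0,1]$. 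Taking the product over $j$ gives
\[
 \prod_j\max\{1,(\lambda_j^*)^{-1}\}\ge c_r^r\prod_j\max\{1,\lambda_j\}=c_r^r\prod_{j:\,\lambda_j>1}\lambda_j,
\]
which is the required bound with implied constant $c_r^{-r}$.

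There is no real obstacle. The only point needing care is to use transference in the correct direction. Lemma~\ref{lem:transfer} supplies an upper bound $\lambda_j\lambda^*_{r-j+1}\ll1$, which is exactly a lower bound on $(\lambda^*_{r-j+1})^{-1}$, and a lower bound on the dual count is precisely what the argument needs. Everything else is the routine bookkeeping of the factors $\max\{1,\cdot\}$.
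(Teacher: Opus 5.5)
Your proof is correct and is essentially the paper's argument: write the count as $\prod_j\max\{1,\lambda_j\}/\prod_j\lambda_j$ via Lemma~\ref{lem:lattice}, replace $\prod_j\lambda_j^{-1}$ by $\operatorname{vol}(\cB)/\det\cL$ using Lemma~\ref{lem:mst}, bound $\max\{1,\lambda_j\}\ll_r\max\{1,(\lambda_{r-j+1}^*)^{-1}\}$ by Lemma~\ref{lem:transfer}, and finish with Lemma~\ref{lem:lattice} applied to $\cL^*$ and $\cB^*$. Your explicit handling of the constant $c_r\le1$ and of the reindexing $j\mapsto r-j+1$ just spells out steps the paper leaves implicit.
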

\begin{proof}
By Lemmas~\ref{lem:mst}, \ref{lem:lattice}, and \ref{lem:transfer}
\[
 \#(\cL\cap\cB)
 \ll_r\frac{\prod_j\max(1,\lambda_j)}{\prod_j\lambda_j}
 \ll_r\frac{\operatorname{vol}(\cB)}{\det\cL}
        \prod_j\max(1,(\lambda_j^*)^{-1}).
\]
The result then follows from another application of Lemma~\ref{lem:lattice}.
\end{proof}

We now specialize to the lattices $\cL(n,d^2)$ defined in~\eqref{eq:LDB}.
For these lattices, the dual has the following explicit description; compare
the explicit dual-lattice formula for a related congruence lattice in~\cite{BK}.
\begin{lemma}
\label{lem:dual-lattice-explicit}
For $n\in\Z$ and $d\in\N$,
\begin{align*}
 \cL(n,d^2)^*
 &=\Z^{k+1}+\frac1{d^2}\Z(1,n,\ldots,n^k)\\
 &=\left\{\frac{(h_0,\ldots,h_k)}{d^2}:
 h_j\in\Z,\ h_j\equiv h_0n^j\pmod{d^2}\ (1\le j\le k)\right\}.
\end{align*}
\end{lemma}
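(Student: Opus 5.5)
We prove the first equality; the second is then a direct rewriting. Write $v=(1,n,\ldots,n^k)\in\Z^{k+1}$ and $\Lambda=\Z^{k+1}+d^{-2}\Z v$. The lattice $\cL(n,d^2)$ consists of the integer vectors $h$ with $\langle h,v\rangle\equiv0\pmod{d^2}$. Equivalently, $\cL(n,d^2)=\{h\in\Z^{k+1}:\langle h,v/d^2\rangle\in\Z\}$, which is exactly the dual of $\Lambda$, since any $h$ satisfying $\langle h,y\rangle\in\Z$ for all $y\in\Lambda$ must in particular lie in $(\Z^{k+1})^*=\Z^{k+1}$. So $\cL(n,d^2)=\Lambda^*$. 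Because $\Lambda$ contains $\Z^{k+1}$, it is a full-rank lattice, and dualising is an involution on full-rank lattices ($(\Lambda^*)^*=\Lambda$). Hence $\cL(n,d^2)^*=\Lambda$.

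If one prefers to avoid the involution, the two inclusions can be checked directly. The inclusion $\Lambda\subseteq\cL(n,d^2)^*$ is immediate: integer vectors pair integrally with $\cL(n,d^2)\subseteq\Z^{k+1}$, and $\langle v/d^2,h\rangle=(h_0+h_1n+\cdots+h_kn^k)/d^2\in\Z$ for $h\in\cL(n,d^2)$.

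For the reverse inclusion, compare indices. The vectors $d^2e_0$ and $e_j-n^je_0$ for $1\le j\le k$ lie in $\cL(n,d^2)$. They form a triangular basis of it: given $h\in\cL(n,d^2)$, subtract $\sum_{j\ge1}h_j(e_j-n^je_0)$ to leave a multiple of $e_0$ divisible by $d^2$. So $\det\cL(n,d^2)=d^2$, and therefore $\det\cL(n,d^2)^*=d^{-2}$. On the other side, $\Lambda/\Z^{k+1}$ is cyclic, generated by $v/d^2$. Since $v$ has first coordinate $1$, the vector $mv/d^2$ is integral exactly when $d^2\mid m$, so the order of this generator is exactly $d^2$. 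Hence $\det\Lambda=d^{-2}$. Two lattices with $\Lambda\subseteq\cL(n,d^2)^*$ and equal determinant coincide.

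For the second description, an element $\frac{m}{d^2}v+z$ with $z\in\Z^{k+1}$ can be written as $h/d^2$ where $h_j=mn^j+d^2z_j$. This gives $h_j\equiv h_0n^j\pmod{d^2}$, using $h_0\equiv m$. Conversely, any such $h$ equals $h_0v+d^2z$ for some integral $z$, so $h/d^2\in\Lambda$.

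There is no real obstacle in this argument. The only point needing care is the index computation, and in particular the use of the first coordinate of $v$ being $1$ to see that the generator $v/d^2$ has order exactly $d^2$.
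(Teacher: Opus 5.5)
Your proposal is correct and is essentially the paper's argument written out in full: the paper's one-line proof (the map $\Z^{k+1}\to\Z/d^2\Z$, $h\mapsto\sum_j h_jn^j$, is surjective with kernel $\cL(n,d^2)$) encodes exactly your inclusion $\Z^{k+1}+d^{-2}\Z v\subseteq\cL(n,d^2)^*$ together with the index $d^2$ on both sides, with surjectivity playing the role of your observation that $v$ has first coordinate $1$. One small point in your bidual variant: $\Lambda$ is discrete because $\Lambda\subseteq d^{-2}\Z^{k+1}$, whereas containing $\Z^{k+1}$ only gives full rank.
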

\begin{proof}
This follows from the fact that the homomorphism $\Z^{k+1}\to\Z/d^2\Z$ given by
$(h_0,\ldots,h_k)\mapsto\sum_jh_jn^j$ is surjective and has kernel
$\cL(n,d^2)$.
\end{proof}

\begin{lemma}
\label{lem:boxdual}
For $\cB=[-H,H]^{k+1}$,
\[
 \cB^*=\{y\in\R^{k+1}:H\sum_{j=0}^k|y_j|\le1\}
       \subseteq[-H^{-1},H^{-1}]^{k+1}.
\]
\end{lemma}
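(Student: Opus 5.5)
We compute the polar body of the box $\cB=[-H,H]^{k+1}$ directly from the definition~\eqref{eq:Dstar}. For $y\in\R^{k+1}$, the map $z\mapsto\langle y,z\rangle$ is linear, so its maximum absolute value over the convex polytope $\cB$ is attained at a vertex. The vertices are the points $z=(\pm H,\ldots,\pm H)$.

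Choosing the signs $z_j=H\,\mathrm{sgn}(y_j)$, with an arbitrary sign when $y_j=0$, gives
\[
 \sup_{z\in\cB}|\langle y,z\rangle|=H\sum_{j=0}^k|y_j|.
\]
The inequality in the other direction is immediate from the triangle inequality, since $|z_j|\le H$ for every $z\in\cB$. Hence $y\in\cB^*$ if and only if $H\sum_j|y_j|\le1$, which is the claimed equality.

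For the inclusion, if $H\sum_j|y_j|\le1$ then each individual term satisfies $H|y_i|\le H\sum_j|y_j|\le1$. Thus $|y_i|\le H^{-1}$ for every $i$, and so $\cB^*\subseteq[-H^{-1},H^{-1}]^{k+1}$.

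There is no genuine obstacle here. The only point requiring care is the sign choice realising the supremum, including the coordinates with $y_j=0$.
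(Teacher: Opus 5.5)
Your proof is correct and matches the paper's, whose one-line proof is exactly the identity $\max_{z\in\cB}|\langle y,z\rangle|=H\sum_j|y_j|$; you establish it by the sign choice together with the triangle inequality, and then read off the inclusion into $[-H^{-1},H^{-1}]^{k+1}$ coordinatewise. The remark about vertices is redundant, since your explicit two-sided argument already proves the identity, but it is harmless.
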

\begin{proof}
This follows from the fact that the maximum of $|\langle y,z\rangle|$ over $z\in\cB$ is
$H\sum_j|y_j|$.
\end{proof}

\begin{lemma}
\label{lem:det}
For $n\in\Z$ and $d\in\N$, $\det\cL(n,d^2)=d^2$.
\end{lemma}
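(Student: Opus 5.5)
The plan is to compute the index of $\cL(n,d^2)$ in $\Z^{k+1}$ through a surjective homomorphism, exactly as in the proof of Lemma~\ref{lem:dual-lattice-explicit}. Consider the map
\[
 \phi:\Z^{k+1}\to\Z/d^2\Z,\qquad (h_0,\ldots,h_k)\mapsto h_0+h_1n+\cdots+h_kn^k \pmod{d^2}.
\]
This map is a group homomorphism. By the definition~\eqref{eq:LDB}, its kernel is precisely $\cL(n,d^2)$. It is surjective, because the image of the first standard basis vector $(1,0,\ldots,0)$ is $1$, which generates $\Z/d^2\Z$.

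By the first isomorphism theorem, $\Z^{k+1}/\cL(n,d^2)\cong\Z/d^2\Z$. Hence the index $[\Z^{k+1}:\cL(n,d^2)]$ equals $d^2$. In particular $\cL(n,d^2)$ has full rank, since it contains $d^2\Z^{k+1}$.

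It then remains to recall the standard fact that, for a full rank sublattice $\cL\subseteq\Z^{k+1}$, the covolume satisfies $\det\cL=[\Z^{k+1}:\cL]\det\Z^{k+1}=[\Z^{k+1}:\cL]$. One way to see this is to count: a fundamental domain of $\cL$ is a union of $[\Z^{k+1}:\cL]$ translates of the unit cube, one for each coset. Alternatively, one can give an explicit basis. A suitable choice is $(d^2,0,\ldots,0)$ together with the vectors $(-n^j,0,\ldots,1,\ldots,0)$ for $1\le j\le k$, where the $1$ sits in the $j$-th position. These vectors lie in the lattice and generate it, and their matrix is triangular with diagonal entries $d^2,1,\ldots,1$.

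None of these steps is an obstacle. The only point needing care is the identification of covolume with index, and the triangular basis verifies that identification directly.
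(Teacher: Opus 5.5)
Your proposal is correct and matches the paper's proof: the paper also takes the surjective homomorphism $\Z^{k+1}\to\Z/d^2\Z$, $(h_0,\ldots,h_k)\mapsto\sum_j h_jn^j$, with kernel $\cL(n,d^2)$, and concludes $[\Z^{k+1}:\cL(n,d^2)]=d^2$. Your explicit triangular basis is a correct extra check of the covolume--index identification that the paper leaves implicit.
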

\begin{proof}
The surjective homomorphism used in the proof of
Lemma~\ref{lem:dual-lattice-explicit} shows that
$[\Z^{k+1}:\cL(n,d^2)]=d^2$.
\end{proof}

\section{Lattice counts -- medium square divisors}
\label{sec:lat-cont}
Recall that
\begin{equation}
\label{eq:SDM}
 S(D,M)=\frac1{H^{k+1}}
 \sum_{\substack{n\sim M\\d\sim D}}
 \#\bigl(\cL(n,d^2)\cap[-H,H]^{k+1}\bigr).
\end{equation}
We next estimate $S(D,M)$ using the geometry-of-numbers estimates from Section~\ref{sec:geometry}.

\begin{lemma}[Large-height lattice count]
\label{lem:lat-cont-large-H}
If $H\ge M\ge1$ and $D\ge1$, then
\[
 S(D,M)\ll
 \frac{MD}{H^{k+1}}+\frac MD+\frac{D}{HM^{k-1}}.
\]
\end{lemma}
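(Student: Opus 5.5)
We apply Lemma~\ref{lem:latticedual} to each lattice $\cL(n,d^2)$ and the box $\cB=[-H,H]^{k+1}$. Since $\operatorname{vol}(\cB)\asymp H^{k+1}$ and $\det\cL(n,d^2)=d^2$ by Lemma~\ref{lem:det}, this gives
\[
 \#\bigl(\cL(n,d^2)\cap\cB\bigr)\ll \frac{H^{k+1}}{D^2}\,\#\bigl(\cL(n,d^2)^*\cap\cB^*\bigr).
\]
We then describe $\cL(n,d^2)^*$ by Lemma~\ref{lem:dual-lattice-explicit} and use $\cB^*\subseteq[-H^{-1},H^{-1}]^{k+1}$ from Lemma~\ref{lem:boxdual}. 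The dual count is then at most the number of integer vectors $(h_0,\dots,h_k)$ with
\[
 |h_j|\le D^2/H,\qquad h_j\equiv h_0n^j \pmod{d^2}\quad (1\le j\le k).
\]
Summing over $n,d$ and dividing by $H^{k+1}$, we get
\[
 S(D,M)\ll D^{-2}\sum_{n\sim M,\,d\sim D} T(n,d),
\]
where $T(n,d)$ is the number of such vectors.

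Next we remove the congruences by the recurrence trick. From the congruences, $h_{j+1}\equiv nh_j\pmod{d^2}$ for $0\le j<k$. We have $|h_{j+1}-nh_j|\le (1+n)D^2/H$. Since $n\sim M\le H$, this is at most about $2D^2\cdot 2M/H\ll D^2$. Here the constant matters, and I expect to be careful: with $d\sim D$ meaning $D<d\le 2D$ we have $d^2> D^2$, while $|h_{j+1}-nh_j|\le (2M+1)D^2/H$. This is $<d^2$ when $H$ exceeds $M$ by a constant factor. If it does not, the range $M\le H\ll M$ can be handled by splitting $h_j$-ranges into $O(1)$ pieces, or by noting that the differences take $O(1)$ values which are multiples of $d^2$. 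In either case, for fixed $h_0$, each $h_{j+1}$ is determined up to $O(1)$ choices from $h_j$.

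In the main case the congruences lift to the equalities $h_{j+1}=nh_j$, so $h_k=n^kh_0$. We now count by the value of $h_0$.

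\textbf{Case $h_0=0$.} Then all $h_j$ vanish (or take $O(1)$ values in the boundary case), so $T(n,d)\ll 1$. These vectors contribute
\[
 D^{-2}\cdot MD= M/D.
\]

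\textbf{Case $h_0\neq0$.} The bound $|n^kh_0|\le D^2/H$ forces
\[
 1\le|h_0|\le D^2/(HM^k).
\]
Each $h_0$ then determines $O(1)$ vectors, so $T(n,d)\ll D^2/(HM^k)$. This contribution is
\[
 D^{-2}\cdot MD\cdot \frac{D^2}{HM^k}=\frac{D}{HM^{k-1}}.
\]

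The term $MD/H^{k+1}$ comes from the contribution of the $h_0=0$ vector in the boundary case. More directly, it covers the case where one uses the trivial bound $\#(\cL\cap\cB)\ge1$, i.e.\ the origin term counted without the dual reduction. It is harmless to include it as an upper bound: the origin contributes exactly $MD/H^{k+1}$ to $S(D,M)$.

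The main obstacle is making the lifting step rigorous uniformly, i.e.\ ensuring $|h_{j+1}-nh_j|<d^2$ so that the congruences become equalities. I would first try to enforce it using $n\le 2M\le 2H$ together with a slightly enlarged but still $O(1)$ enumeration of the possible multiples of $d^2$. This costs only constants, and the claimed three-term bound then follows.
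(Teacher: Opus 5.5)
Your main-case argument is the paper's: dualize with Lemma~\ref{lem:latticedual}, read off $h_j\equiv h_0n^j \pmod{d^2}$ with $|h_j|\le d^2/H$, lift to $h_{j+1}=nh_j$, and count $h_0=0$ (giving $M/D$) and $1\le|h_0|\ll D^2/(HM^k)$ (giving $D/(HM^{k-1})$). The gap is the regime you flag but do not close, $M\le H\le 2M+1$. There the bound $|h_{j+1}-nh_j|\le(n+1)d^2/H$ does not force equality, and neither of your remedies works as stated.

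\emph{Splitting the ranges.} Splitting the $h_j$-ranges into $O(1)$ pieces does nothing by itself. A piece away from the origin still contains $|h_j|\asymp d^2/H$, so $|nh_j|\asymp d^2$. You would need the further observation that two dual points in one piece differ by a dual point in a cube shrunk by a constant factor, and then run the lift on that smaller cube.

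\emph{Allowing multiples of $d^2$.} Writing $h_{j+1}=nh_j+c_jd^2$ with $|c_j|\le 3$ does give $O(1)$ continuations of each $h_0$. But then $h_k=n^kh_0$ need not hold, so your bound $|h_0|\le D^2/(HM^k)$ is no longer justified. A priori $h_0$ ranges over $\asymp D^2/H$ values, which only yields the useless $S(D,M)\ll MD/H$. Your phrase ``costs only constants'' is exactly what has to be proved.

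\emph{The missing line.} Fix a tuple $(c_0,\dots,c_{k-1})$. Then $h_k=n^kh_0+d^2C$ with $C=\sum_{i<k}c_in^{k-1-i}$. So $|n^kh_0+d^2C|\le d^2/H$ confines $h_0$ to an interval of length $2d^2/(Hn^k)$. This gives $O(1+D^2/(HM^k))$ values of $h_0$ per tuple, and there are $7^k$ tuples. With this, your proof is complete, and you no longer need to treat $h_0=0$ separately.

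The paper avoids the boundary case another way. It applies Lemma~\ref{lem:scales} with $t=1/100$ to the dual lattice and the cube $[-H^{-1},H^{-1}]^{k+1}$. The remaining count then has $|h_j|\le d^2/(100H)$, and $|nh_j-h_{j+1}|\le(2M+1)d^2/(100H)<d^2$ for every $M\le H$, so the lift always works. The price is the volume term $\operatorname{vol}/\det=2^{k+1}d^2/H^{k+1}$ of the dual lattice. That is where $MD/H^{k+1}$ comes from, not from the origin of $\cL(n,d^2)$, which only gives a lower bound for $S(D,M)$. Your account of this term is off but harmless: since $M\le H$, we have $MD/H^{k+1}\le D/(HM^{k-1})$, so the term is redundant in the statement.
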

\begin{proof}
Lemmas~\ref{lem:latticedual}, \ref{lem:boxdual} and~\ref{lem:det} give
\[
 S(D,M)\ll\frac1{D^2}
 \sum_{\substack{n\sim M\\d\sim D}}
 \#\bigl(\cL(n,d^2)^*\cap[-H^{-1},H^{-1}]^{k+1}\bigr).
\]
Apply Lemma~\ref{lem:scales} with $t=1/100$.
By Lemma~\ref{lem:det}, this yields
\begin{align}
\label{eq:dualrescaling}
 S(D,M)\ll\frac{MD}{H^{k+1}}+\frac1{D^2}
 \sum_{\substack{n\sim M\\d\sim D}}
 \#\bigl(\cL(n,d^2)^*\cap[-(100H)^{-1},(100H)^{-1}]^{k+1}\bigr).
\end{align}
By Lemma~\ref{lem:dual-lattice-explicit}, each point in the last
count corresponds to integers $w_0,\ldots,w_k$ satisfying
\begin{equation}
\label{eq:w0n}
 w_j\equiv w_0n^j\pmod{d^2}\quad(1\le j\le k),
 \qquad |w_j|\le\frac{d^2}{100H}\quad(0\le j\le k).
\end{equation}
For $0\le j<k$ we have $nw_j\equiv w_{j+1}\pmod{d^2}$ and
\[
 |nw_j-w_{j+1}|
 \le\frac{(2M+1)d^2}{100H}<d^2.
\]
Thus $w_{j+1}=nw_j$ for every $0\le j<k$, and
$|w_0|\ll D^2/(HM^k)$. For fixed $n,d$, this leaves
$O(1+D^2/(HM^k))$ possibilities, each determining all the $w_j$.
Substitution in~\eqref{eq:dualrescaling} proves the result.
\end{proof}

\begin{lemma}[Rescaled lattice count]
\label{lem:lat-cont-small-H}
Let $1\le M\le N$, $D\ge1$, $1\le H\le N^A$ and $0<t\le1$.
If $2tH<M$, then
\[
 S(D,M)\ll\frac MD+\frac{MD}{t^kH^{k+1}}+tMN^{o(1)}.
\]
\end{lemma}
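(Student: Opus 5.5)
Apply Lemma~\ref{lem:scales} directly to each lattice $\cL(n,d^2)$ and the box $\cB=[-H,H]^{k+1}$, with $r=k+1$ and the given $t$. This gives
\[
 \#\bigl(\cL(n,d^2)\cap[-H,H]^{k+1}\bigr)\ll \frac{(2H)^{k+1}}{d^2}+t^{-k}\#\bigl(\cL(n,d^2)\cap[-tH,tH]^{k+1}\bigr),
\]
using Lemma~\ref{lem:det} for the determinant. Summing over $n\sim M$, $d\sim D$ and dividing by $H^{k+1}$, the first term contributes $\ll MD/D^2\cdot D\ll M/D$. This is the first term of the claimed bound.

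For the second term I interchange the summation. The quantity to bound is
\[
 \frac{1}{t^kH^{k+1}}\sum_{\substack{|h_j|\le tH\\ 0\le j\le k}}\ \sum_{n\sim M}\#\{d\sim D: d^2\mid h_0+h_1n+\cdots+h_kn^k\}.
\]
I split by whether $g(X)=h_0+\cdots+h_kX^k$ vanishes at $n$.

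\emph{Case $g(n)\neq 0$.} Here $|g(n)|\ll tHN^k\le N^{A+k}$, so by the divisor bound the number of $d$ is $N^{o(1)}$. The total count is then
\[
 \ll (tH)^{k+1}MN^{o(1)}.
\]
Dividing by $t^kH^{k+1}$ gives $tMN^{o(1)}$, which is the third term. This is exactly where saving one power of $t$ in Lemma~\ref{lem:scales} is essential.

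\emph{Case $g(n)=0$.} Every $d\sim D$ is then counted. The hypothesis $2tH<M$ is used here. Since $n\sim M$ gives $n\ge M>2tH\ge 2\max_j|h_j|$, reducing $g(n)=0$ modulo $n$ gives $n\mid h_0$ with $|h_0|<n$, so $h_0=0$. Dividing by $n$ and iterating (as in the proof of Lemma~\ref{lem:small-coef-control}) forces all $h_j=0$. So only the zero vector arises. It contributes $MD$ to the count, hence $MD/(t^kH^{k+1})$ after normalisation, which is the second term.

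Combining the three contributions gives the lemma. The main obstacle is the zero-value case. The hypothesis $2tH<M$ is precisely what rules out nonzero polynomials with small coefficients vanishing at $n$. The rest is the routine application of Lemma~\ref{lem:scales} and the divisor bound.
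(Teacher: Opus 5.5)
Your proof is correct and follows the paper's argument: rescale each $\cL(n,d^2)$ via Lemma~\ref{lem:scales}, let the zero vector contribute $MD$, and bound nonzero vectors by the divisor bound after using $2tH<M$ to show $P(n)\neq 0$. You prove $P(n)\neq0$ by reducing modulo $n$, while the paper uses the equivalent size bound $|P(n)|\ge n^r-tH\sum_{j<r}n^j>0$. The only blemish is the garbled expression $MD/D^2\cdot D$ for the first term; it should read $H^{-(k+1)}\cdot MD\cdot H^{k+1}/D^2\ll M/D$, and the conclusion $M/D$ is right.
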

\begin{proof}
Lemmas~\ref{lem:scales} and~\ref{lem:det} give
\begin{align}
\label{eq:SDM11}
 S(D,M)\ll\frac MD+
 \frac1{t^kH^{k+1}}
 \sum_{\substack{n\sim M\\d\sim D}}
 \#\bigl(\cL(n,d^2)\cap[-tH,tH]^{k+1}\bigr).
\end{align}
The zero vector contributes $O(MD)$. If $tH<1$, it is the only
integer vector in the box, and the result follows.
Otherwise there are $O((tH)^{k+1})$ nonzero integer vectors.
For each such vector, let $r$ be the largest index with $h_r\ne0$
and put $P(X)=\sum_{j=0}^rh_jX^j$. Since $n>M>2tH$, for $r\ge1$
\[
 |P(n)|\ge n^r-tH\sum_{j=0}^{r-1}n^j>0;
\]
the assertion is immediate for $r=0$.
Also $|P(n)|\ll_k HN^k$. Thus for each $n\sim M$ the number
of positive integers $d$ with $d^2\mid P(n)$ is $N^{o(1)}$.
The sum on the right hand side of~\eqref{eq:SDM11} is consequently at most
\[
 MD+(tH)^{k+1}MN^{o(1)}
\]
and this completes the proof.
\end{proof}

\section{Proof of Theorem~\ref{thm:main}}
\label{sec:completion-of-the-proof-of-main-theorem}
Put $\eta=2\varepsilon/7$, so that $0<\eta<1/7$, and choose
$E$ as in~\eqref{eq:Edef}. By Proposition~\ref{prop:large-squares},
dyadic subdivision, and $\#\cF_k^*(H)\asymp H^{k+1}$, it is sufficient to show that 
\begin{equation}
\label{eq:final-reduction}
 \max_{\substack{N^{1-\eta}\le M\le N\\
                 N^{1-\eta}\le D\le\min(E,B)}}S(D,M) \ll H^{k+1}N^{1-\eta+o(1)}.
\end{equation}
We estimate this maximum in three height ranges.

First suppose $H\le N$. Take $t=N^{-\eta}/4$ in
Lemma~\ref{lem:lat-cont-small-H}. Its hypothesis holds because
$2tH\le N^{1-\eta}/2<M$. Uniformly over the dyadic blocks,
\begin{equation}
\label{eq:primal-final}
 S(D,M)\ll N^\eta+
 \frac{N^{1+\eta k}E}{H^{k+1}}+N^{1-\eta+o(1)}.
\end{equation}
If $H\le N^{2/3}$, the middle term is exactly $N^{1-2\eta}$
with our choice of $E$. If $N^{2/3}<H\le N$, it is at most
\[
 N^{1+(k-1)/2+(k+1)\eta}H^{-k-1/2}
 \le N^{1-(k+5)/6+(k+1)\eta}
 \ll N^{1-\eta},
\]
since $\eta<1/7$ and $(k+5)/6>(k+2)/7$.
Thus~\eqref{eq:primal-final} is $O(N^{1-\eta+o(1)})$ throughout
$H\le N$.

Finally suppose $H>N$. Then $H\ge M$, so
Lemma~\ref{lem:lat-cont-large-H} gives
\[
 S(D,M)\ll
 \frac{NE}{H^{k+1}}+N^\eta+
 \frac{E}{HN^{(k-1)(1-\eta)}}.
\]
Substituting $E=H^{1/2}N^{(k-1)/2+\eta}$ and using $H>N$ bounds
the first and last terms by $N^{\eta-k/2}$ and
$N^{-k/2+k\eta}$, respectively. Hence this case also contributes
$O(N^{1-\eta+o(1)})$.

Equation~\eqref{eq:final-reduction} now gives
\[
 \frac1{\#\cF_k(H)}
 \sum_{f\in\cF_k(H)}|S_f(N)-c_fN|
 \ll N^{1-\eta+o(1)}.
\]
For sufficiently large $N$, all $o(1)$ terms can be absorbed
into $\eta/2$. This proves Theorem~\ref{thm:main} with
$\delta=\eta/2=\varepsilon/7$.


\begin{thebibliography}{99}
\bibitem{Ba}
W. Banaszczyk, Inequalities for convex bodies and polar reciprocal
lattices in $\R^n$, \emph{Discrete Comput. Geom.} \textbf{13} (1995),
217--231.

\bibitem{BK}
M. Bordignon and B. Kerr, An explicit P\'olya--Vinogradov inequality
via partial Gaussian sums, \emph{Trans. Amer. Math. Soc.}
\textbf{373} (2020), 6503--6527.

\bibitem{BrShp}
T. D. Browning and I. E. Shparlinski, Square-free values of random
polynomials, \emph{J. Number Theory} \textbf{261} (2024), 220--240.

\bibitem{Erdos}
P. Erd\H{o}s, Arithmetical properties of polynomials,
\emph{J. London Math. Soc.} \textbf{28} (1953), 416--425.

\bibitem{Estermann}
T. Estermann, Einige S\"atze \"uber quadratfreie Zahlen,
\emph{Math. Ann.} \textbf{105} (1931), 653--662.

\bibitem{Filaseta}
M. Filaseta, Squarefree values of polynomials,
\emph{Acta Arith.} \textbf{60} (1992), 213--231.

\bibitem{Granville}
A. Granville, ABC allows us to count squarefrees,
\emph{Int. Math. Res. Not.} (1998), no.~19, 991--1009.

\bibitem{Helfgott}
H. A. Helfgott, Square-free values of $f(p)$, $f$ cubic,
\emph{Acta Math.} \textbf{213} (2014), 107--135.

\bibitem{Hooley}
C. Hooley, On the power free values of polynomials,
\emph{Mathematika} \textbf{14} (1967), 21--26.

\bibitem{Jelinek}
P. Jelinek, Square-free values of polynomials on average,
preprint (2023), \href{https://arxiv.org/abs/2308.15146}{arXiv:2308.15146}.

\bibitem{Kuba}
G. Kuba, On the distribution of reducible polynomials,
\emph{Math. Slovaca} \textbf{59} (2009), 349--356.

\bibitem{OpenAI}
OpenAI, \emph{Squarefree values of quartics and power-free values of
polynomials}, preprint (2026), released 24 September 2026,
\href{https://github.com/openai/math/blob/main/preprints/Squarefree-values-of-quartics-and-power-free-values-of-polynomials-September-24-2026/manuscript.pdf}{OpenAI Math repository}.

\bibitem{Reuss}
T. Reuss, Power-free values of polynomials,
\emph{Bull. London Math. Soc.} \textbf{47} (2015), 270--284.

\bibitem{Shp}
I. E. Shparlinski, On the average number of square-free values of
polynomials, \emph{Canad. Math. Bull.} \textbf{56} (2013), 844--849.

\bibitem{Simhi}
O. Simhi, \emph{Squarefree values of random polynomials are Gaussian
mixtures}, preprint (2026),
\href{https://arxiv.org/abs/2610.29911}{arXiv:2610.29911}.

\bibitem{Sofos}
E. Sofos, Counting square-free values of random polynomials,
preprint (2026), \href{https://arxiv.org/abs/2601.19319}{arXiv:2601.19319}.

\bibitem{TaoVu}
T. Tao and V. Vu, \emph{Additive Combinatorics},
Cambridge Studies in Advanced Mathematics, vol.~105,
Cambridge University Press, Cambridge, 2006.

\end{thebibliography}
\end{document}